\theoremstyle{plain}
\newtheorem{thm}{Theorem}[section]
\newtheorem{lem}[thm]{Lemma}
\newtheorem{prop}[thm]{Proposition}
\theoremstyle{definition}
\newtheorem{defn}[thm]{Definition}
\newtheorem*{ack}{Acknowledgements}
\theoremstyle{remark}
\newtheorem{rmk}[thm]{Remark}
\numberwithin{equation}{section}
\crefname{lem}{Lemma}{Lemmas}
\crefname{thm}{Theorem}{Theorems}
\newcommand{\N}{\mathbb{N}}
\newcommand{\Z}{\mathbb{Z}}
\newcommand{\Q}{\mathbb{Q}}
\newcommand{\C}{\mathbb{C}}
\newcommand{\R}{\mathbb{R}}
\newcommand{\bfa}{\mathbf{a}}
\newcommand{\bfe}{\mathbf{e}}
\newcommand{\bfu}{\mathbf{u}}
\newcommand{\bfv}{\mathbf{v}}
\newcommand{\bfeps}{\boldsymbol\varepsilon}
\newcommand{\bfzero}{\mathbf{0}}
\newcommand{\dc}{\operatorname{dc}}
\newcommand{\Sym}{\mathrm{Sym}}
\title[Negligibility of elliptic elements]{Negligibity of elliptic elements in ascending HNN-extensions of $\Z^m$}
\author{Motiejus Valiunas}
\address{Mathematical Sciences, University of Southampton, University Road, Southampton SO17 1BJ, United Kingdom}
\email{m.valiunas@soton.ac.uk}
\date{\today}
\subjclass[2010]{20P05, 20E06, 20F69}
\keywords{Baumslag-Solitar groups, Sol-manifolds, ascending HNN-extensions, elliptic elements, degree of nilpotence}
\begin{document}

\begin{abstract}
We study ascending HNN-extensions $G$ of finitely generated free abelian groups: examples of such $G$ include soluble Baumslag-Solitar groups and fundamental groups of orientable prime $3$-manifolds modelled on Sol geometry. In particular, we study the elliptic subgroup $A \leq G$, consisting of all elements that stabilise a point in the Bass-Serre tree of $G$. We consider the density of $A$ with respect to ball counting measures corresponding to finite generating sets of $G$, and we show that $A$ is exponentially negligible in $G$ with respect to such sequences of measures. As a consequence, we show that the set of tuples $(x_0,\ldots,x_r) \in G^{r+1}$, such that the $(r+1)$-fold simple commutator $[x_0,\ldots,x_r]$ vanishes, is exponentially negligible in $G^{r+1}$ with respect to sequences of ball counting measures.
\end{abstract}

\maketitle
\tableofcontents

\section{Introduction}

The growth of soluble Baumslag-Solitar groups, $G = BS(1,N) = \langle a,t \mid tat^{-1} = a^N \rangle$, has been widely studied. For instance, it has been shown that they have rational growth with respect to the standard generating set, with an explicitly calculated growth series \cite{ceg}. Moreover, the growth series for a `higher dimensional' generalisation of $BS(1,3)$ -- the ascending HNN-extension of $\Z^m$ given by the `cubing homomorphism' $\bfu \mapsto 3\bfu$ -- has been calculated in \cite{sanchez} with respect to standard generators.

Some results on growth of Baumslag-Solitar groups that are independent of the choice of a generating set are also known; in particular, the minimal exponential growth rates (with respect to an arbitrary finite generating set) for $BS(1,N)$ have been calculated in \cite{bucher}. This paper aims to provide additional results of this nature.

Growth of horocyclic subgroups $\langle a \rangle \cong \Z$ of Baumslag-Solitar groups $G = BS(p,q)$ has been also studied, as this is usually the first step to understanding the growth of $G$ itself: see \cite{fks}. Here we study the growth of the normal closure $A = \langle\!\langle a \rangle\!\rangle \cong \Z\left[\frac1N\right]$ of the horocyclic subgroup of $BS(1,N)$. We say a subset $\mathcal{A} \subseteq G$ is \emph{exponentially negligible} in a group $G$ with respect to a finite generating set $X$ if the proportion of elements that are inside $\mathcal{A}$, counted over the ball of radius $n$ in the Cayley graph $\Gamma(G,X)$, tends to zero exponentially fast as $n \to \infty$ (see \cref{d:negl}).

\begin{thm}[see {\cref{t:A}}] \label{t:A-BS}
Let $G = BS(1,N)$ with $|N| \geq 2$ and let $A \lhd G$ be the normal closure of the horocyclic subgroup in $G$. Then $A$ is exponentially negligible in $G$ with respect to any finite generating set.
\end{thm}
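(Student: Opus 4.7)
The strategy is to compare the exponential growth rate of $A$ inside $G$, measured in the word metric of an arbitrary finite generating set $X$, against the exponential growth rate of $G$ itself: if the former is strictly smaller then $|B_{X}(n)\cap A|/|B_{X}(n)|$ decays exponentially. Observe that $A=\ker\phi$ where $\phi:G\to\Z$ sends $a\mapsto 0$ and $t\mapsto 1$; equivalently, $A$ is the subgroup of elements of zero translation length on the Bass--Serre tree $T$. Every $g\in A$ admits a unique normal form $g=t^{-k}a^{m}t^{k}$ with $k\geq 0$, $m\in\Z$, and $\gcd(N,m)=1$ when $k\geq 1$; geometrically, $k$ is the distance in $T$ from the base vertex $v_{0}$ (stabilised by $\langle a\rangle$) to the nearest vertex fixed by $g$.

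The first step is an upper bound on $|B_{X}(n)\cap A|$. Each $x\in X$ displaces $v_{0}\in T$ by at most a constant $D=D(X)$, so any element of $B_{X}(n)$ can fix only vertices at tree-distance at most $Dn$ from $v_{0}$, forcing the depth $k$ to satisfy $k\leq Dn$. Moreover, iterated use of the HNN relation $tat^{-1}=a^{N}$ shows that $a^{m}$ is represented by a word of length $O(\log_{N}(1+|m|))$ in the standard generators $\{a,t\}$; passing to $X$ via bi-Lipschitz equivalence and combining with the depth bound one gets $\log_{N}(1+|m|)\leq Cn$ for some $C=C(X)$. Counting admissible pairs $(k,m)$ then yields $|B_{X}(n)\cap A|\leq p(n)\,N^{\alpha n}$ for a polynomial $p$ and a constant $\alpha=\alpha(X)>0$.

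The second step is to exhibit a matching lower bound $|B_{X}(n)|\geq\beta^{n}$ with $\beta=\beta(X)>N^{\alpha(X)}$. The key point is that $X$ must contain an element acting hyperbolically on $T$ (otherwise $\langle X\rangle\subseteq A\neq G$), and by combining the ensuing tree-branching orbit of $v_{0}$ with the independent $\Z$-translation direction one produces many more elements of $B_{X}(n)$ than those counted in Step~1. Combining the two steps then gives $|B_{X}(n)\cap A|/|B_{X}(n)|\leq p(n)(N^{\alpha}/\beta)^{n}\to 0$ exponentially.

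The principal obstacle is Step~2: both $\alpha$ and $\beta$ depend on $X$, so the strict inequality $\beta>N^{\alpha}$ has to be established \emph{uniformly} over all finite generating sets, which cannot be read off from the explicit growth-series calculations of \cite{ceg,sanchez}. A robust argument must use the Bass--Serre geometry of $G$ intrinsically, showing that the exponential branching rate of the tree action always outpaces the depth-controlled growth of $A$. This uniform tree-branching estimate is presumably the technical core of \cref{t:A}, of which \cref{t:A-BS} is the rank-one specialisation.
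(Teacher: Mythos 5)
Your high-level goal — show that the exponential growth rate of $A$ in the word metric of $X$ is strictly less than the growth rate of $G$ — is indeed equivalent to \cref{t:A-BS}, but both steps of your proposed route to it have genuine problems, and you have honestly flagged one of them yourself.

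The trouble with Step~1 is that the upper bound $|B_X(n)\cap A|\leq p(n)\,N^{\alpha n}$ with $\alpha=\alpha(X)$ read off from the bi-Lipschitz constant $c$ between $|\cdot|_X$ and $|\cdot|_{\{a,t\}}$ is too crude to be useful. Since $c\geq 1$ always (every nontrivial element has $|\cdot|_{\{a,t\}}\geq 1$), your count of admissible pairs $(k,m)$ gives $N^{\alpha}\geq N\geq 2$. But by the Bucher result the paper itself cites, the minimal exponential growth rate of $BS(1,N)$ over all generating sets is the golden ratio $\tfrac{1+\sqrt5}{2}<2$, so there exist generating sets $X$ with $\mu(X)<N\leq N^{\alpha}$, and for those your Step~1 bound exceeds $|B_X(n)|$. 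You have over-counted $A\cap B_X(n)$ by grouping all elements by their $(k,m)$ coordinates and using only the depth bound $k\leq Dn$ and the crude coordinate bound $|m|\leq N^{Cn}$; that estimate ignores the fact that the bulk of $B_X(n)\cap A$ cannot simultaneously realise both extremes.

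Step~2 you concede is missing, and the "uniform tree-branching estimate" you envision is not what the paper does. The paper's actual argument (Lemmas \ref{l:Zgen}, \ref{l:highwords}, \ref{l:manywords} feeding into \cref{t:Aalmost}) never directly compares the growth rate of $A$ with that of $G$. Instead it shows: (a) all but exponentially few elements $g\in A\cap B_Y(n)$ lie in a set $\mathcal{Z}$ where $\|\varphi(g)\|_\infty\geq\beta^{|g|_Y}$ for a $\beta>1$ chosen \emph{close to} $1$ (not close to $N$ — $\beta$ is taken with $\beta^{2m}<\alpha$, where $\alpha$ is the exponential growth rate of $A$ itself, so the "few" exceptional elements have count $O((\beta^{2m}/\alpha)^n)$); (b) any geodesic for such a $g$ must reach height $\geq\delta|g|_Y$ in the Bass--Serre tree; and (c) that height allows one to insert $k$ commuting fragments $t^{-q}a_jt^{q}$ at $k$ distinct levels, producing $\binom{\lfloor\delta n\rfloor}{k}$ distinct factorisations $g=l_1l_2$ with $|l_i|_Y$ only slightly longer than before. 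Dividing $|B_Y(n_1)||B_Y(n_2)|$ by this super-polynomial overcounting factor, rather than by any direct growth-rate comparison, is what drives the exponential decay. Your intuition that "branching outpaces depth-controlled growth" resonates loosely with (c), but the mechanism is a pigeonhole/injectivity counting argument on factorisations of $A$-elements, not a lower bound on $|B_X(n)|$ produced from orbit growth in the tree. Without that mechanism, Step~2 as written has no content, and Step~1 as written cannot be rescued by it.
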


Another class of groups this paper aims to study are certain cyclic extensions of $\Z^m$. In particular, we study the semidirect products $G = \Z^m \rtimes \Z = \Z^m \rtimes_T \Z$, where the action of the generator $1 \in \Z$ on $\Z^m$ is represented by a matrix $T \in SL(m,\Z)$ that has an eigenvalue $\lambda \in \C$ with $|\lambda| > 1$. In the case $m = 2$, such groups are known to be $3$-manifold groups: they are fundamental groups of orientable prime $3$-manifolds modelled on Sol geometry -- one of the eight geometries of $3$-manifolds appearing in Thurston's Geometrisation Conjecture \cite{scott}.

Such groups $G = \Z^m \rtimes_T \Z$ are soluble (in fact, metabelian), but not virtually nilpotent; in particular, such a group $G$ must have exponential growth.
%, which are the fundamental groups of orientable prime $3$-manifolds modelled on Euclidean, Nil and Sol geometries; these are three of the eight geometries of $3$-manifolds appearing in the Thurston's geometrisation conjecture. In the present paper, we concentrate on Sol-manifolds: algebraically, $G = \pi_1(M)$ for an orientable prime $3$-manifold $M$ modelled on Sol geometry if and only if $G = \Z^2 \rtimes \Z$ 
There has been some interest in growth properties of these groups in the case $m = 2$. Specifically, it is known that a finite-index subgroup of such a group $G$ has rational growth with respect to some generating set \cite{putman}, and in certain cases (in which $G$ itself has rational growth) the growth series of $G$ was explicitly computed \cite{parry}. Here we study growth of the base subgroup $\Z^m$ of the extension $G = \Z^m \rtimes_T \Z$.

\begin{thm}[see {\cref{t:A}}] \label{t:A-3m}
Let $G = \Z^m \rtimes_T \Z$ be a cyclic extension of $\Z^m$, where $T \in SL(m,\Z)$ has an eigenvalue $\lambda \in \C$ with $|\lambda| > 1$. Then the base group $\Z^m$ is exponentially negligible in $G$ with respect to any finite generating set.
\end{thm}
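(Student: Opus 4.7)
Since $T \in SL(m,\Z)$ is an automorphism of $\Z^m$, the semidirect product $G = \Z^m \rtimes_T \Z$ is itself an ascending HNN-extension of $\Z^m$, with stable letter $t$ and associated isomorphism $T$. Because $T$ is surjective, every conjugate $t^{-k} \Z^m t^k$ equals $\Z^m$, so the elliptic subgroup in the sense of \cref{t:A} is exactly $\Z^m$, and the claim is an instance of \cref{t:A} applied to the injective endomorphism $\phi = T$ (which has an eigenvalue of modulus $>1$ by hypothesis). I outline how I would prove the statement directly, mirroring the anticipated argument for \cref{t:A}.

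Fix a finite generating set $X$ of $G$ and write each $y \in X^{\pm 1}$ in normal form as $y = \bfu_y t^{k_y}$, with $M := \max_y \|\bfu_y\|$ and $\ell := \max_y |k_y|$. Let $\pi : G \twoheadrightarrow \Z$ be the $t$-exponent homomorphism, so $\Z^m = \ker \pi$. A direct induction shows that a word $w = y_1 \cdots y_n$ with partial sums $\sigma_i := k_{y_1} + \cdots + k_{y_i}$ represents
\[
w = \Bigl(\textstyle\sum_{i=1}^n T^{\sigma_{i-1}} \bfu_{y_i}\Bigr) \, t^{\sigma_n},
\]
so $w \in \Z^m$ precisely when $\sigma_0, \ldots, \sigma_n$ is a bridge from $0$ to $0$. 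The task then reduces to comparing the exponential growth rates of $B_n := B_n(G,X)$ and of $B_n \cap \Z^m$. For the upper bound on $|B_n \cap \Z^m|$ I would stratify contributing words by the height $H := \max_i |\sigma_i|$ of their bridge: the displayed formula forces the resulting vector into a ball of radius $\leq n M \|T\|^H$, so at most $(3nM\|T\|^H)^m$ elements arise from bridges of height $\leq H$, which combined with a ballot-type estimate controlling the fraction of length-$n$ bridges that reach height $\geq \delta n$ gives $|B_n \cap \Z^m| \leq \beta^n$ for an explicit $\beta$. For the matching lower bound $|B_n| \geq \alpha^n$, the hypothesis $|\lambda|>1$ is decisive: since $X$ generates $G$ (not merely $\Z^m$), some $y \in X$ has $k_y \neq 0$, and interleaving powers of $y$ with bounded $\Z^m$-perturbations produces $\gtrsim |\lambda|^{cn}$ distinct elements, detectable by projection onto a left-eigenvector of $T$ for $\lambda$.

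The main obstacle I anticipate is that the upper-bound constants $M$, $\|T\|$ and the bridge statistics all depend on $X$, whereas $|\lambda|$ is a fixed spectral invariant, so one must be careful to extract an exponential gap $\beta < \alpha$ uniformly in $X$. My plan to address this is to replace the crude operator norm $\|T\|$ by the linear functional $\phi : \Z^m \to \R$ (or $\C$) satisfying $\phi(T\bfu) = \lambda \phi(\bfu)$, taking real and imaginary parts if $\lambda \notin \R$, and to count via its values: the identity $\phi(w) = \sum_i \lambda^{\sigma_{i-1}} \phi(\bfu_{y_i})$ makes the spectral stretch act coordinate-freely, so the interplay between the height of the bridge and the number of achievable $\phi$-values should yield matching growth bounds controlled by $|\lambda|$ alone, giving the required exponential negligibility independently of the chosen generating set.
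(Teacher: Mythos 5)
Your reduction of \cref{t:A-3m} to \cref{t:A} is correct and is precisely what the paper does: since $T \in SL(m,\Z)$ is invertible over $\Z$, the group $\Z^m \rtimes_T \Z$ is $G(m,T)$ in the notation of \cref{d:GmT}, its elliptic subgroup $A(m,T)=\ker\tau$ is just the base $\Z^m$, and the hypothesis $|\lambda|>1$ together with \cref{p:Ggrowth} shows $G$ has exponential growth, so \cref{t:A} applies. Up to that point your argument is complete and matches the paper.

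However, your sketch of how \cref{t:A} itself would be proved departs from the paper and contains a genuine gap. Your strategy is to stratify elements of $\Z^m\cap B_Y(n)$ by the maximal height $H$ of the $\tau$-bridge of a word representing them: low-height bridges land in a ball of radius $nM\|T\|^H$, contributing at most $(3nM\|T\|^H)^m$ distinct elements, and you then want a ballot-type estimate to say that high bridges are rare. The difficulty is that a ballot/reflection estimate controls the \emph{number of words} whose $t$-profile is a high bridge, not the \emph{number of distinct elements} those words represent; the natural bound it gives is of the shape $\varepsilon^{\,n}|Y^{\pm 1}|^n$ for the high-bridge contribution. To conclude exponential negligibility you would have to verify $\max\bigl(\|T\|^{\delta m},\ \varepsilon\,|Y^{\pm 1}|\bigr) < \alpha$, where $\alpha$ is the exponential growth rate of $B_Y(n)$ — and these two constraints pull $\delta$ in opposite directions while all four quantities $\|T\|$, $|Y^{\pm 1}|$, $\varepsilon$, $\alpha$ change with $Y$. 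You explicitly flag this as the main obstacle, but the proposed remedy (replacing $\|T\|$ by the eigenfunctional $\phi$ with $\phi(T\bfu)=\lambda\phi(\bfu)$) only sharpens the low-bridge count; it does not address the mismatch between counting words and counting elements in the high-bridge regime, nor does it make $\alpha$ depend on $|\lambda|$ alone once $Y$ is arbitrary.

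The paper's proof of \cref{t:A} is structured so as to sidestep this comparison of explicit constants altogether. Rather than bounding $|A\cap B_Y(n)|$ above and $|B_Y(n)|$ below by matching exponentials, it first shows (\cref{l:Zgen}, \cref{l:highwords}) that a generic $g\in A\cap B_Y(n)$ has every geodesic reaching height at least $\delta|g|_Y$, and then (\cref{l:manywords}) that for such $g$ one can insert $k$ independent ``bumps'' $t^{-q}a_j^{\pm 1}t^{q}$ into that geodesic, yielding at least $\binom{\lfloor\delta n/R\rfloor}{k}$ distinct factorisations $g=g_1g_2$ with $g_1,g_2$ in slightly larger balls. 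This injection gives
\[
|\mathcal{A}(n)|\ \binom{\lfloor\delta_0 n\rfloor}{k}\ \leq\ |B_Y(p+q)|\cdot|B_Y(n-p+q)|,
\]
and taking $k=nf(n)$ for any $f(n)\to 0$ shows $\log|A\cap B_Y(n)| - \log|B_Y(n)|\leq -nf(n)$ (\cref{t:Aalmost}), from which exponential negligibility follows by a short sequence argument. The decisive advantage is that the bound is relative — it compares $|A\cap B_Y(n)|$ directly with a product of ball sizes — so one never needs $\beta<\alpha$ for explicitly computed $\beta,\alpha$. If you want to rescue your bridge-counting approach, you would have to convert it into a similarly relative argument; otherwise the constant-matching step remains unproved.
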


As an application of \cref{t:A-BS,t:A-3m}, we study `probabilistic nilpotence' of these classes of groups. In particular, for $r \in \N$ and a group $G$, define
\begin{equation} \label{e:NrG}
\mathcal{N}_r(G) = \{ (x_0,\ldots,x_r) \in G^{r+1} \mid [x_0,\ldots,x_r] = 1 \} \subseteq G^{r+1},
\end{equation}
where we define $[x_0,x_1] = x_0^{-1} x_1^{-1} x_0 x_1$, and, inductively,
\[ [x_0,\ldots,x_r] = [[x_0,\ldots,x_{r-1}],x_r]. \]

In \cite[Theorem 1.9]{mtvv}, Martino, Tointon, Ventura and the author showed that if a group $G$ is finitely generated and not virtually nilpotent, then the probability that a random walk on a Cayley graph of $G^{r+1}$ will end in $\mathcal{N}_r(G)$ after $n$ steps tends to zero as $n \to \infty$. It is not known whether the same result holds if a `random walk measure' is replaced by a `ball counting measure' -- see \cite[Question 1.32]{mtvv} and discussion before \cref{t:dn}. However, the following result answers this question affirmatively for our particular classes of groups.

\begin{thm}[see {\cref{t:dn}}] \label{t:dn-spec}
Let $G$ be either $BS(1,N)$ with $|N| \geq 2$ or the cyclic extension $\Z^m \rtimes_T \Z$ where $T \in SL(m,\Z)$ has an eigenvalue $\lambda \in \C$ with $|\lambda| > 1$. Then $\mathcal{N}_r(G)$ is exponentially negligible in $G^{r+1}$ with respect to any finite generating set of $G$.
\end{thm}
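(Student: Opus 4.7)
Both groups $G$ in the statement are metabelian with $A$ abelian normal and $G/A \cong \Z$ abelian. Let $\pi \colon G \twoheadrightarrow \Z$ denote the projection and let $T$ denote the induced action of a generator of $\Z$ on $A$ (namely, multiplication by $N$ when $G = BS(1,N)$, and the matrix $T$ when $G = \Z^m \rtimes_T \Z$). A direct calculation in the semidirect product gives $[y, x] = (T^{-\pi(x)} - I)(y)$ for $y \in A$ and $x \in G$. Hence the simple commutator $y_r := [x_0, \ldots, x_r]$ lies in $A$ for every $r \geq 1$, and for $r \geq 2$
\[
y_r \;=\; \prod_{i=2}^r \bigl(T^{-\pi(x_i)} - I\bigr) \cdot y_1,
\]
the factors commuting since $G/A$ is abelian.

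My plan is to reduce the statement to the base case $r = 1$ together with the negligibility of $A$ in $G$ (given by \cref{t:A-BS} and \cref{t:A-3m}). When $G = BS(1, N)$, $T$ acts by the scalar $N$, so $T^{-k} - I$ is injective for $k \neq 0$; hence $y_r = 0$ iff $y_1 = 0$ or $x_i \in A$ for some $i \in \{2, \ldots, r\}$, which gives
\[
\mathcal{N}_r(G) \;\subseteq\; \bigl(\mathcal{N}_1(G) \times G^{r-1}\bigr) \;\cup\; \bigcup_{i=2}^r \bigl(G^i \times A \times G^{r-i}\bigr).
\]
Dividing by $|B_n|^{r+1}$, each of the latter pieces contributes $|A \cap B_n|/|B_n|$, exponentially small by \cref{t:A-BS}. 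The same argument applies to $\Z^m \rtimes_T \Z$ provided $T$ has no non-trivial root-of-unity eigenvalues---automatic when $m = 2$, since the second eigenvalue $1/\lambda$ is also off the unit circle. For the general $\Z^m \rtimes_T \Z$ case, $\ker(T^{-k} - I)$ for $k \neq 0$ is merely a proper subspace of $\Q^m$ (as $\lambda^k \neq 1$), so an additional step is needed to bound the contribution from $y_{r-1}$ lying in the rank-$(<m)$ sublattice of $A$ determined by such a kernel; I expect this to reduce again to \cref{t:A-3m}, applied to the fact that this sublattice is itself contained in $A$.

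It remains to handle $r = 1$. Writing $|\mathcal{N}_1(G) \cap B_n^2| = \sum_{x_1 \in B_n} |C_G(x_1) \cap B_n|$ and splitting on whether $x_1 \in A$: the $x_1 \in A$ contribution is at most $|A \cap B_n| \cdot |B_n|$, exponentially small relative to $|B_n|^2$ by \cref{t:A-BS}/\cref{t:A-3m}. For $x_1 \notin A$, the hypothesis on $T$ ensures that $C_G(x_1) \cap A = \ker(T^{\pi(x_1)} - I) \cap A$ has rank strictly less than $m$, while $\pi(C_G(x_1)) = d\Z$ for some positive $d$ dividing $\pi(x_1)$. Combined with the Lipschitz property of $\pi$ on any Cayley graph of $G$, this should control $|C_G(x_1) \cap B_n|$ by a quantity whose sum over $x_1 \in B_n \setminus A$, divided by $|B_n|^2$, decays exponentially. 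The main difficulty I foresee is establishing this uniform control on $|C_G(x_1) \cap B_n|$ in the semidirect case---where $A$ is exponentially distorted in $G$ and centralizer subgroups need not be undistorted a priori---alongside the treatment of root-of-unity eigenvalues in the inductive reduction for $\Z^m \rtimes_T \Z$.
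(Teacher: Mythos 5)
Your plan is on the right track and, for the two ``clean'' cases, essentially works out: when $G = BS(1,N)$ with $|N| \geq 2$ the operator $T$ is the scalar $N$, so $T^{-k}-I$ is injective for $k\neq 0$; and when $m=2$ and $T \in SL(2,\Z)$ has an eigenvalue of modulus $>1$, the other eigenvalue lies strictly inside the unit circle, so again $\ker(T^{k}-I)=0$ for all $k\neq 0$. In both of these cases $C_G(x_1)\cap A=\{1\}$ whenever $x_1\notin A$, so $\tau\colon C_G(x_1)\hookrightarrow\Z$ is injective and $|C_G(x_1)\cap B_Y(n)|$ is at most linear in $n$; combined with \cref{t:A-BS,t:A-3m} this closes the $r=1$ case and the inductive decomposition you wrote down.

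However, the statement covers \emph{all} $m$, and for $m\geq 3$ a matrix in $SL(m,\Z)$ can have an eigenvalue off the unit circle while simultaneously having root-of-unity eigenvalues. There your argument has two genuine gaps. First, the reduction ``$y_r=0$ iff $y_1=0$ or some $x_i\in A$'' is false in that setting: $\ker\bigl(\prod_{i\geq 2}(T^{-\pi(x_i)}-I)\bigr)$ is in general strictly larger than $\bigcup_i \ker(T^{-\pi(x_i)}-I)$ (for commuting non-invertible factors the kernel of a product contains the \emph{sum} of the individual kernels, and can exceed even that when the restriction of $T$ to the root-of-unity generalised eigenspace is not semisimple), so your containment of $\mathcal{N}_r(G)$ drops solutions. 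Second, even for $r=1$, when $x_1\notin A$ but $\ker(T^{\pi(x_1)}-I)\neq 0$, you need a \emph{uniform} (in $x_1$) polynomial bound on $|C_G(x_1)\cap B_Y(n)|$, and in particular on $|\,\ker(T^{\pi(x_1)}-I)\cap\Z^m\cap B_Y(n)\,|$: this is nontrivial precisely because $A$ is exponentially distorted, and requires an argument showing that the relevant sublattice is not distorted, via the Jordan structure of $T$. You identify both of these as ``the main difficulty'' but do not supply the needed estimate.

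The paper takes a somewhat different route to bypass exactly these two issues. Rather than reducing to $\mathcal{N}_1$, it applies a general counting lemma (\cref{l:gennilp}) that works uniformly in $r$, with $\mathcal{N}$ not the trivial subgroup $\{1\}$ but the union $\bigcup_{i\geq 0}Z_i(H)$ of the upper central series of $H=\langle a_1,\ldots,a_m,t^N\rangle$, where $N$ is chosen so that every root-of-unity eigenvalue of $T$ satisfies $\lambda^N=1$. Lemma \ref{l:ZHinA} then shows, via the Jordan normal form of $T^N$, that $|\mathcal{N}\cap B_Y(n)|$ is polynomially bounded (this is the uniform ``slow growth'' estimate your approach is missing), and Lemma \ref{l:CGig} packages the centraliser bound for elements outside $A$ by observing that any two elements of $\{h : [g,h]\in\mathcal{N}\}$ with the same $t$-height differ by an element of $\mathcal{N}$, reducing the centraliser estimate back to \cref{l:ZHinA}. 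To complete your proof along the lines you propose, you would essentially have to reproduce this polynomial-growth estimate for the kernels $\ker(T^k-I)\cap\Z^m$ and build in a substitute for the upper-central-series trick to handle the enlarged kernel of the product.
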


In \cref{s:mres}, we give statements of the main results of this paper, \cref{t:A,t:dn}, which apply for ascending HNN-extensions of $\Z^m$. An observation that the latter class of groups includes both the Baumslag-Solitar groups $BS(1,N)$ and the cyclic extensions $\Z^m \rtimes \Z$ mentioned above allows us to deduce \cref{t:A-BS,,t:A-3m,t:dn-spec}. In \cref{s:Ggrowth}, we classify ascending HNN-extensions of $\Z^m$ in terms of their growth: see \cref{p:Ggrowth}. We prove \cref{t:A} in \cref{s:A} and \cref{t:dn} in \cref{s:dn}.

\begin{ack}
The author would like to thank his PhD supervisor, Armando Martino, without whose guidance and support this work would not have been completed, and the anonymous referee for valuable comments.
\end{ack}

\section{Main definitions and results} \label{s:mres}

To study soluble Baumslag-Solitar groups and cyclic extensions $\Z^m \rtimes \Z$, we consider a more general class of groups. In particular, we study \emph{ascending HNN-extensions of finitely generated free abelian groups}. Such groups can be parametrised by square matrices with integer entries and non-zero determinant:

\begin{defn} \label{d:GmT}
Let $m \in \N$ and let $T$ be an $m \times m$ matrix with integer entries and $\det T \neq 0$.
\begin{enumerate}
\item Define the group $G(m,T)$ by the presentation
\[ G(m,T) = \left\langle a_1, \cdots, a_m, t \:\middle|\: \begin{array}{@{}c@{}} a_ia_j = a_ja_i \text{ for } 1 \leq i < j \leq m \\ t \bfa^\bfu t^{-1} = \bfa^{T\bfu} \text{ for all } \bfu \in \Z^m \end{array} \right\rangle, \]
where $\bfa^\bfv$ denotes $a_1^{v_1} \cdots a_m^{v_m}$ for any $\bfv = (v_1,\ldots,v_m) \in \Z^m$. It is easy to see that each element of $G$ can be expressed (although non-uniquely) as $t^{-r}\bfa^\bfu t^s$ for some $r,s \geq 0$ and $\bfu \in \Z^m$.

\item Consider the homomorphism $\tau: G(m,T) \to \Z$ given by the $t$-exponent sum: let $\tau(a_i) = 0$ for $1 \leq i \leq m$ and $\tau(t) = 1$. Define the \emph{elliptic subgroup} $A(m,T)$ of $G(m,T)$ as
\[ A(m,T) = \ker\tau \trianglelefteq G(m,T); \]
notice that $A(m,T)$ is abelian.
\end{enumerate}
\end{defn}

\begin{rmk} \label{r:hnn}
As an alternative construction, note that $G(m,T)$ can be expressed as an ascending HNN-extension of $\Z^m$:
\[ G(m,T) \cong (\Z^m) \ast_\phi, \]
where $\phi: \Z^m \xrightarrow\cong T(\Z^m)$ is defined by $\phi(\bfu) = T\bfu$. We may define $A(m,T)$ in terms of the action of the group $G(m,T)$ on its Bass-Serre tree $\mathcal{T}$, corresponding to this HNN-decomposition. In particular, we define $A(m,T)$ to be the union of all point stabilisers with respect to the action of $G(m,T)$ on $\mathcal{T}$. Since this action fixes an end of $\mathcal{T}$, the set $A(m,T)$ turns out to be a (normal) subgroup.
\end{rmk}

Notice that the family of groups $G(m,T)$ includes both soluble Baumslag-Solitar groups and aforementioned cyclic extensions. Indeed, for $m = 1$ and $T = \begin{pmatrix} N \end{pmatrix}$ for some non-zero $N \in \Z$ we have $G(m,T) \cong BS(1,N)$, whereas if $T \in SL(n,\Z)$ then $G(m,T) \cong \Z^m \rtimes_T \Z$. Moreover, the `higher Baumslag-Solitar groups' studied in \cite{sanchez} are just $G(m,3I_m)$, where $I_m$ is the identity matrix.

It is worth mentioning that in the case $T \in SL(2,\Z)$, if in addition $m = 2$ then the groups $G(m,T)$ appear in the theory of $3$-manifolds. Specifically, we have $G(2,T) \cong \pi_1(M)$ for an orientable prime $3$-manifold $M$ modelled on Euclidean, Nil or Sol geometry -- these are three out of eight geometries appearing in the classification of $3$-manifolds; see \cite{scott}. Moreover, the geometry on which $M$ is modelled can be recognised from $T$: it is Sol if $T$ has two distinct real eigenvalues, Nil if $T$ is not diagonalisable, and Euclidean otherwise.

In the present paper we are interested in the growth of the elliptic subgroup $A(m,T)$ relative to the growth of $G(m,T)$. As mentioned above, this subgroup is the normal closure of the horocyclic subgroup in a soluble Baumslag-Solitar group in the case $m = 1$, or the base subgroup of the cyclic extension $\Z^m \rtimes_T \Z$ if $T \in SL(m,\Z)$. In order to study the growth of $A(m,T)$, we first need to introduce some terminology.

Fix a finitely generated infinite group $G$ and let $Y$ be a finite generating set of $G$. This allows us to define the \emph{word metric} $|\cdot|_Y$ for $G$, by letting $|g|_Y$ to be the minimal word-length of $g \in G$ with respect to $Y$. For $n \in \Z_{\geq 0}$, let
\[ B_Y(n) = \{ g \in G \mid |g|_Y \leq n \} \]
be the \emph{ball} in $G$ with respect to $Y$ of radius $n$.

For any $r \in \N$, we may characterise `small' and `large' subsets of the $r$-fold direct product $G^r = \overbrace{G \times \cdots \times G}^r$ by using ball counting measures, as follows.

\begin{defn} \label{d:negl}
Let $Y \subset G$ be a finite generating set, let $r \in \N$, and let $\mathcal{A} \subseteq G^r$ be a subset. Let
\[ \gamma_n^Y(\mathcal{A}) = \frac{|\mathcal{A} \cap B_Y(n)^r|}{|B_Y(n)|^r}. \]
\begin{enumerate}
\item We say $\mathcal{A}$ is \emph{negligible} in $G^r$ with respect to $Y$ if
\[ \limsup_{n \to \infty} \gamma_n^Y(\mathcal{A}) = 0. \]
\item We say $\mathcal{A}$ is \emph{exponentially negligible} in $G^r$ with respect to $Y$ if
\[ \limsup_{n \to \infty} \sqrt[n]{\gamma_n^Y(\mathcal{A})} < 1. \]
It is clear that an exponentially negligible subset of $G^r$ is also negligible.
\item We say $\mathcal{A}$ \emph{has exponential growth} in $G^r$ with respect to $Y$ if
\[ \liminf_{n \to \infty} \sqrt[n]{|\mathcal{A} \cap B_Y(n)^r|} > 1. \]
\end{enumerate}
\end{defn}

These concepts are not new: for instance, closely related notions of natural density and exponential density of subsets in groups were introduced in \cite{burillo}. Similar, although not equivalent, notions of negligible and strongly negligible subsets of direct products of groups were considered in \cite{kapovich}.

\begin{rmk}
It is easy to construct subsets $\mathcal{A} \subseteq G^r$ of a group $G$ of exponential growth such that $\mathcal{A}$ is (exponentially) negligible with respect to some generating set but not with respect to another one. For instance, if $G = F_2 \times F_3$ is a direct product of two free groups of ranks 2 and 3, and if $X_i$ is a basis for $F_i$ ($i \in \{2,3\}$), then $\{1\} \times F_3$ is not negligible in $G$ with respect to the `union' of the generating sets, $Y_\cup = (X_2 \times \{1\}) \cup (\{1\} \times X_3)$, but exponentially negligible with respect to their `product', $Y_\times = (X_2^{\pm 1} \cup \{1\}) \times (X_3^{\pm 1} \cup \{1\})$. Indeed, note that we have
\[
B_{Y_\cup}(n) = \bigcup_{i=0}^n \left( B_{X_2}(i) \times B_{X_3}(n-i) \right) \qquad \text{and} \qquad B_{Y_\times}(n) = B_{X_2}(n) \times B_{X_3}(n).
\]
This allows us to calculate sizes of balls in $G$ and their intersections with $\{1\} \times F_3$ explicitly to obtain asymptotics
\[
|B_{Y_\cup}(n)|, |(\{1\} \times F_3) \cap B_{Y_\cup}(n)|, |(\{1\} \times F_3) \cap B_{Y_\times}(n)| \sim 5^n \qquad \text{and} \qquad |B_{Y_\times}(n)| \sim 15^n,
\]
where we write $f(n) \sim g(n)$ if $\frac{f(n)}{g(n)} \to C$ for some $C \in (0,\infty)$ as $n \to \infty$.

On the other hand, note that the inequality $\liminf_{n \to \infty} \sqrt[n]{|\mathcal{A} \cap B_Y(n)^r|} > 1$ is independent of the generating set $Y$ (as all word metrics on $G$ are bi-Lipschitz equivalent), and hence if a subset has exponential growth with respect to some finite generating set, then it has exponential growth with respect to all of them. Thus we may simply say that $\mathcal{A}$ \emph{has exponential growth} in $G^r$ (without referring to a particular generating set).
\end{rmk}

We now return to the case of a group $G = G(m,T)$ and its elliptic subgroup $A = A(m,T)$, as in \cref{d:GmT}. It is well-known that in many cases, $A$ will have exponential growth in $G$: that is, we have $\liminf_{n \to \infty} \sqrt[n]{|A \cap B_Y(n)|} > 1$ for every finite generating set $Y$ of $G$. More precisely, we have

\begin{prop} \label{p:Ggrowth}
\begin{enumerate}
\item If all eigenvalues of $T$ are equal to $1$, then the group $G(m,T)$ is nilpotent.
\item If absolute values of all eigenvalues of $T$ are equal to $1$, then the group $G(m,T)$ is virtually nilpotent.
\item Otherwise, $G(m,T)$ has exponential growth, and $A(m,T)$ has exponential growth in $G(m,T)$.
\end{enumerate}
\end{prop}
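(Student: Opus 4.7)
The plan is to treat the three cases separately, with an observation common to (i) and (ii): in both cases every eigenvalue of $T$ has modulus $1$, so $|\det T| = \prod_i|\lambda_i| = 1$ forces $T \in GL(m,\Z)$. Consequently $G(m,T) \cong \Z^m \rtimes_T \Z$ is a genuine semidirect product and the relation $t^{-1}\bfa^\bfu t = \bfa^{T^{-1}\bfu}$ is available in $G$. For (i), let $E := T - I$, which is nilpotent with $E^m = 0$. Direct computation yields $[\bfa^\bfu, t] = \bfa^{(T^{-1}-I)\bfu} = \bfa^{-T^{-1}E\bfu}$ and $[\bfa^\bfu, t^{-1}] = \bfa^{E\bfu}$; since $E$ and $T$ commute, each subgroup $\bfa^{E^k(\Z^m)}$ is normal in $G$. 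A short induction on $k \geq 1$ then shows $\gamma_{k+1}(G) = \bfa^{E^k(\Z^m)}$, so $\gamma_{m+1}(G) = 1$ and $G(m,T)$ is nilpotent of class at most $m$. For (ii), Kronecker's theorem implies every eigenvalue of $T$ (an algebraic integer whose Galois conjugates are again eigenvalues, all of modulus $1$) is a root of unity. Taking $k$ to be a common multiple of their orders makes $T^k$ unipotent, so the finite-index subgroup $\langle a_1,\ldots,a_m,t^k\rangle \cong \Z^m \rtimes_{T^k}\Z$ is nilpotent by (i), and $G(m,T)$ is virtually nilpotent.

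For (iii), $|\det T| \geq 1$ rules out all $|\lambda_i| \leq 1$ with at least one strict inequality, so some eigenvalue $\lambda$ satisfies $|\lambda| > 1$. Fix a left eigenvector $\mathbf{w} \in \C^m$ with $\mathbf{w}^\top T = \lambda \mathbf{w}^\top$, a vector $\bfv \in \Z^m$ with $c := \mathbf{w}^\top\bfv \neq 0$ (possible since $\mathbf{w}$ has a nonzero coordinate), and an integer $d \geq 1$ with $|\lambda|^d > 2$. For each $\bfeps = (\varepsilon_0, \ldots, \varepsilon_{n-1}) \in \{0,1\}^n$, set
\[
g_\bfeps \;=\; \prod_{i=0}^{n-1} t^{-id}\bfa^{\varepsilon_i\bfv}t^{id} \;\in\; A(m,T).
\]
The Bass--Serre description of \cref{r:hnn} gives an embedding $A(m,T) \hookrightarrow \Q^m$, $t^{-k}\bfa^\bfu t^k \mapsto T^{-k}\bfu$, under which $g_\bfeps$ maps to $\sum_{i=0}^{n-1}\varepsilon_i T^{-id}\bfv$. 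Pairing with $\mathbf{w}$ and using $\mathbf{w}^\top T^{-1} = \lambda^{-1}\mathbf{w}^\top$ yields the scalar $c\sum_i\varepsilon_i(\lambda^d)^{-i}$. If $\bfeps\neq\bfeps'$ produced equal values, taking $j$ minimal with $\varepsilon_j\neq\varepsilon'_j$ would give
\[
1 \;=\; |\varepsilon_j - \varepsilon'_j| \;\leq\; \sum_{i>j} |\lambda|^{-d(i-j)} \;=\; \frac{1}{|\lambda|^d - 1} \;<\; 1,
\]
a contradiction. So the $g_\bfeps$ are pairwise distinct. Consolidating $t$-blocks, $g_\bfeps$ admits the word representation
\[
g_\bfeps \;=\; \bfa^{\varepsilon_0\bfv}\,t^{-d}\bfa^{\varepsilon_1\bfv}\,t^{-d}\bfa^{\varepsilon_2\bfv}\cdots t^{-d}\bfa^{\varepsilon_{n-1}\bfv}\,t^{(n-1)d}
\]
of length $O(n)$ in the generating set $\{a_1,\ldots,a_m,t\}$. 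Hence $A(m,T)$ contains $2^n$ distinct elements of word length $O(n)$, so $A(m,T)$ (and a fortiori $G(m,T)$) has exponential growth.

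The main obstacle lies in part (iii). Two subtleties require attention: first, one must choose an integer vector $\bfv$ non-orthogonal to the possibly complex left eigenvector $\mathbf{w}$, which is where the digression through $\C^m$ is genuinely used; and second, the $\{0,1\}$-digit expansions in base $\lambda$ can admit collisions when $1 < |\lambda| \leq 2$, which is why passing to the power $T^d$ with $|\lambda|^d > 2$ is essential. Parts (i) and (ii) reduce to direct computation with the lower central series once the observation $T \in GL(m,\Z)$ is made.
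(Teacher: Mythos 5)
Your proof is correct, and it reaches the result by a genuinely different and in places lighter route than the paper's. For (i), the paper induces on $m$: it picks a primitive integral vector in $\ker(T-I)$, changes basis so it is $\bfe_m$, observes $a_m$ is central and $G(m,T)/\langle a_m\rangle \cong G(m-1,\hat T)$, and recurses. You instead observe $T \in GL(m,\Z)$ and compute the lower central series directly, showing $\gamma_{k+1}(G) = \bfa^{E^k\Z^m}$ with $E = T-I$ nilpotent; both arguments are sound, and yours has the small bonus of exhibiting the nilpotency class $\leq m$ explicitly. Part (ii) is essentially identical in both (Kronecker forces roots of unity; pass to the finite-index subgroup $\langle a_1,\ldots,a_m,t^N\rangle \cong G(m,T^N)$). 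The real divergence is in (iii): the paper proves a stronger and more specialised statement (\cref{l:geps}), needing a \emph{standard basis vector} $\bfe_j$ and a power $R$ for which $\|T^{kR}\bfe_j\|$ strictly dominates the sum of all lower iterates, and it achieves this by a careful Jordan normal form analysis with binomial asymptotics; that extra precision is required because \cref{l:geps} is reused in the proof of \cref{l:manywords}. Since you are only after exponential growth of $A$, you get away with a much cheaper device: pair $\varphi(g_\bfeps)$ with a \emph{left} eigenvector $\mathbf{w}$ of $T$ for an eigenvalue $|\lambda|>1$, collapse the $m$-dimensional problem to a scalar $\{0,1\}$-expansion in base $\lambda^d$, and kill collisions by taking $|\lambda|^d>2$. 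This sidesteps Jordan blocks and eigenvalue multiplicities entirely, at the cost of replacing a single generator $a_j$ by the element $\bfa^\bfv$ — harmless here, since it only changes the implied $O(n)$ constant. Both approaches buy exactly what is needed in their respective contexts.
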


We prove \cref{p:Ggrowth} in \cref{s:Ggrowth}.

As a corollary of \cref{p:Ggrowth}, we immediately obtain the well-known facts that the group $BS(1,N)$ with $|N| \geq 2$ and the fundamental group of a $3$-dimensional Sol-manifold both have exponential growth. These facts allow us to deduce \cref{t:A-3m,t:A-BS} from \cref{t:A}, as well as \cref{t:dn-spec} from \cref{t:dn}.

\begin{rmk}
We do not claim all of the statements in \cref{p:Ggrowth} to be original: for instance, if $|\det(T)| \geq 2$ then the Bass-Serre tree of $G$ (see \cref{r:hnn}) has infinitely many ends, and in this case it is known (see \cite{hb}) that $G$ has uniformly exponential growth (and hence exponential growth). Nevertheless, we are not aware of any reference in the literature which includes these statements, and thus we prove \cref{p:Ggrowth} here for completeness.
\end{rmk}

Note that the group $G/A \cong \Z$  has linear growth -- in particular, the number of cosets of $A$ in $G$ that intersect $B_Y(n)$ non-trivially grows linearly with $n$ (for any finite generating set $Y$ of $G$). Hence we might expect $A$ to be `large' in $G$. However, our first general main result states:

\begin{thm} \label{t:A}
If $G(m,T)$ has exponential growth, then $A(m,T)$ is exponentially negligible in $G(m,T)$ with respect to any finite generating set.
\end{thm}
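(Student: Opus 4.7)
The plan is to decompose each element of $A = A(m,T)$ via the HNN-extension structure and compare the count of $A \cap B_Y(n)$ with the total size of $B_Y(n)$. Every $g \in A$ admits a canonical form $g = t^{-r}\bfa^\bfu t^r$ with a unique minimal $r = \dc(g) \geq 0$ (and $\bfu \notin T(\Z^m)$ when $r \geq 1$), so $g$ is determined by the pair $(\dc(g), \bfu)$.

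First, for every finite generating set $Y$ I would establish constants $C_1 = C_1(Y)$ and $C_2 = C_2(Y)$ such that $\dc(g) \leq C_1 |g|_Y$ and $|\bfa^\bfu|_Y \leq C_2 |g|_Y$ for all $g \in A$. The first inequality follows from the Lipschitz action of $G$ on its Bass-Serre tree $\mathcal{T}$ (cf.\ \cref{r:hnn}): each generator moves the base vertex by a bounded tree-distance. The second follows from the identity $\bfa^\bfu = t^{\dc(g)} g t^{-\dc(g)}$, which gives $|\bfa^\bfu|_Y \leq |g|_Y + 2\dc(g) |t|_Y \leq (1 + 2 C_1 |t|_Y) |g|_Y$. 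Consequently,
\[
|A \cap B_Y(n)| \leq (C_1 n + 1) \cdot M_Y(C_2 n),
\]
where $M_Y(N) := |\{\bfu \in \Z^m : |\bfa^\bfu|_Y \leq N\}|$ is the growth function of the exponentially distorted subgroup $\bfa^{\Z^m} \cong \Z^m$ inside $G$.

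On the other hand, by \cref{p:Ggrowth} together with the hypothesis that $G$ has exponential growth, one has $|B_Y(n)| \geq \lambda^n$ for some $\lambda = \lambda(Y) > 1$. Writing $\mu := \limsup_{N \to \infty} M_Y(N)^{1/N}$, the exponential negligibility of $A$ then reduces to a strict inequality in which the exponential growth rate of $M_Y$ (weighted by the constant $C_2$) is strictly dominated by $\lambda$.

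The main obstacle is precisely this strict comparison, uniformly in $Y$. The naive Lipschitz estimate above for $C_2$ --- simply summing the length cost of ``conjugating out'' with the length of the fibre contribution --- is typically too lossy to yield $\mu^{C_2} < \lambda$ directly; for instance in $G = BS(1,2)$ with standard generators one has $\mu = \sqrt{2}$ and $\lambda = (1+\sqrt 5)/2$ but $C_2 \approx 3$. I therefore expect the actual argument to require a sharper lower bound on $|g|_Y$ that reflects a genuine trade-off between the tree-depth $\dc(g)$ and the horocyclic contribution $|\bfa^\bfu|_Y$, preventing a ball of radius $n$ from simultaneously accommodating large depth and unrestricted $\bfu$. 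Such a refined bound would give $|A \cap B_Y(n)| \leq C \cdot \text{poly}(n) \cdot \mu^n$ rather than $\mu^{C_2 n}$, to be compared against $|B_Y(n)| \geq \lambda^n$ via a separate argument showing $\mu < \lambda$ --- capturing the fact that $G$ has an additional ``$t$-direction'' of growth absent from $A$, for example by enumerating distinct elements $h t^k \in B_Y(n) \setminus A$ obtained from $h \in A$ by attaching tails with non-trivial $\tau$-value.
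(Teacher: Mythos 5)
Your proposal has the right first-order intuition but runs into a genuine obstruction, and you partially identify it yourself. The catch is that the fix you sketch does not match what is actually needed, and the strategy as a whole is not the one the paper uses.

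The parametrisation $g \leftrightarrow (\dc(g),\bfu)$ is correct, and the two Lipschitz estimates $\dc(g)\leq C_1|g|_Y$ and $|\bfa^\bfu|_Y\leq C_2|g|_Y$ are valid, giving $|A\cap B_Y(n)|\leq (C_1n+1)\,M_Y(C_2 n)$. But as you observe, this only produces exponential negligibility if $\mu^{C_2}<\lambda$, and the factor $C_2$ is a generic bi-Lipschitz constant with no control as $Y$ varies; your own $BS(1,2)$ computation already kills it. The deeper problem is with the proposed repair: even the cleaner target $|A\cap B_Y(n)|\lesssim\mathrm{poly}(n)\,\mu^n$ together with $\mu<\lambda$ is not a programme the paper carries out, and it is unclear how to do it. In particular, $A$ is strictly larger than $\langle a_1,\ldots,a_m\rangle$, and the inequality $\mu<\lambda$ would have to be established uniformly over \emph{every} finite generating set $Y$, with both $\mu$ and $\lambda$ depending on $Y$; the ``extra $t$-direction'' heuristic does not obviously give a quantitative separation. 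So there is a missing idea, not just a missing computation.

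What the paper actually does is avoid any direct comparison of growth rates of $\langle a_1,\ldots,a_m\rangle$ against $G$. Instead it argues combinatorially with a many-to-one surjection onto $A\cap B_Y(n)$. Concretely: \cref{l:Zgen,l:highwords} show that a generic $g\in A\cap B_Y(n)$ is represented by a geodesic whose height $\mathfrak{h}(\omega)$ is $\geq\delta|g|_Y$; \cref{l:manywords} then takes such a geodesic, splits it at the extremal height into $g=g_1g_2$ with $g_1\in t^hA$, $g_2\in t^{-h}A$, and inserts into $\omega_1$ and $\omega_2$ (at $k$ of $\lfloor\delta n/R\rfloor$ available ``rungs'' along the height profile, chosen via \cref{l:geps}) cancelling pairs $t^{-q}a_jt^{q}$ and $t^{-q}a_j^{-1}t^{q}$. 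This produces $\binom{\lfloor\delta n/R\rfloor}{k}$ distinct pairs $(l_{B,1},l_{B,2})$ in slightly larger balls that all multiply to $g$. Hence
\[
|\mathcal{A}(n)|\;\leq\;\frac{|B_Y(p+q)|\,|B_Y(n-p+q)|}{\binom{\lfloor \delta_0 n\rfloor}{k}},
\]
and after a pigeonhole reduction from $A\cap B_Y(n)$ to $\mathcal{A}(n)$ and an optimisation over $k$ (this is \cref{t:Aalmost} plus a soft reduction), the binomial denominator supplies the exponential savings relative to $|B_Y(n)|$ --- no inequality of the form $\mu<\lambda$ ever appears. So the key ingredient you would need to add is this ``many decompositions'' lemma (or something equally strong), not a sharper Lipschitz constant.
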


We prove \cref{t:A} in \cref{s:A}.

\begin{rmk}
Note that $G = G(m,T)$ is metabelian, as $A = A(m,T)$ is abelian and the sequence
\begin{equation} \label{e:metab}
1 \to A \hookrightarrow G \xrightarrow\tau \Z \to 1
\end{equation}
is exact; furthermore, the vector space $A \otimes \Q$ is $m$-dimensional over $\Q$. We suspect that, under some additional technical results, our proof of \cref{t:A} generalises to any groups $G$ and $A \unlhd G$ such that $A$ is abelian, \eqref{e:metab} is exact, and $A \otimes \Q$ is finite-dimensional. In other words, we may expect to generalise the argument in the case when the matrix $T$ has rational entries that are not necessarily integers. Details of this are left to the interested reader.
\end{rmk}

As an application of \cref{t:A}, we study `probabilistic nilpotence' of the groups $G(m,T)$. In particular, for $r \in \N$ and a group $G$, define $\mathcal{N}_r(G) \subseteq G^{r+1}$ as in \eqref{e:NrG}. Notice that $\mathcal{N}_r(G) = G^{r+1}$ if and only if $G$ is nilpotent of class at most $r$. This suggests that measuring $\mathcal{N}_r(G)$ will tell us how close to being nilpotent a group is. In \cite{mtvv}, Martino, Tointon, Ventura and the author defined the \emph{degree of $r$-nilpotence} of $G$ with respect to a sequence of measures $M = (\mu_n)_{n=1}^\infty$ on $G$ as
\[ \dc^k_M(G) = \limsup_{n \to \infty} (\mu_n \times \cdots \times \mu_n)(\mathcal{N}_r(G)). \]

It was shown in \cite[Theorem 1.8]{mtvv} that if $G$ is finitely generated and the sequence $(\mu_n)$ measures index uniformly -- that is, $\mu_n(xH) \to [G:H]^{-1}$ uniformly over all $x \in G$ and all subgroups $H \leq G$, where by convention $[G:H]^{-1} = 0$ if $H$ has infinite index in $G$ -- then $\dc^k_M(G) > 0$ if and only if $G$ is virtually $k$-step nilpotent. A particular case of special interest of this are the `random walk measures': it was shown in \cite[Theorem 14]{tointon} that if $\mu$ is a symmetric, finitely supported generating probability measure on $G$ with $\mu(1) > 0$, and if $M = (\mu^{*n})$ is the sequence of measures corresponding to the steps of the random walk on $G$ with respect to $\mu$, then $M$ measures index uniformly on $G$.

Instead of random walk measures, here we consider `ball counting measures': that is, we replace the sequence $(\mu^{*n})$ with the sequence $(\gamma_n^Y)$ as in \cref{d:negl}. Intuitively, this can be thought of as avoiding `overcounting' whilst measuring a subset: while $\mu^{*n}(\mathcal{A})$ counts \emph{all} random walks of length $n$ ending up in $\mathcal{A} \cap B_Y(n)$, where $Y$ is the support of $\mu$, the number $\gamma_n^Y(\mathcal{A})$ counts every element of $\mathcal{A} \cap B_Y(n)$ exactly once.

In general, the sequence $(\gamma_n^Y)_{n=1}^\infty$ need not measure index uniformly, even if we restrict to groups of the form $G(m,T)$ -- see \cref{r:notunif}. However, we do not know if there exists a finitely generated group $G = \langle Y \rangle$ that is not virtually nilpotent, such that $\mathcal{N}_r(G)$ is not negligible in $G^{r+1}$ with respect to $Y$ -- see \cite[Question 1.32]{mtvv}. The following result follows from \cref{t:A} and shows that a group of the form $G(m,T)$ of exponential growth cannot be taken as such an example, even though (by \cref{p:Ggrowth}) it has an abelian subgroup $A(m,T)$ of exponential growth in $G(m,T)$.

\begin{thm} \label{t:dn}
If $G = G(m,T)$ has exponential growth, then $\mathcal{N}_r(G)$ is exponentially negligible in $G^{r+1}$ with respect to any finite generating set.
\end{thm}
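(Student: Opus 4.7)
The approach is to combine a direct formula for iterated commutators in $G$ with the exponential negligibility of $A$ from \cref{t:A}, and to exploit the ``expanding'' part of the $T$-action on $V := A \otimes_{\Z} \Q \cong \Q^m$. Writing each $x_i = b_i t^{k_i}$ with $b_i \in A$ and $k_i = \tau(x_i) \in \Z$, a direct computation in the HNN-extension gives
\[ [x_0, x_1] \;=\; T^{-k_0-k_1}\bigl[(T^{k_0}-1)b_1 - (T^{k_1}-1)b_0\bigr], \]
and using $[y, b t^k] = (T^{-k}-1)y$ for $y \in A$, induction on $r$ extends this to
\[ [x_0,\ldots,x_r] \;=\; T^{-k_0-k_1}\prod_{i=2}^r (T^{-k_i}-1)\cdot \bigl[(T^{k_0}-1)b_1 - (T^{k_1}-1)b_0\bigr] \;\in\; A. \]
Crucially, the right-hand side depends on $x_2,\ldots,x_r$ only through the exponents $k_2,\ldots,k_r$, not through $b_2,\ldots,b_r$.

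Decompose $V = V_0 \oplus V_1$ into $T$-invariant $\Q$-rational subspaces, where $V_0$ is the sum of generalised eigenspaces of $T$ for root-of-unity eigenvalues and $V_1$ is its complement. By \cref{p:Ggrowth}, $T$ has an eigenvalue of modulus greater than $1$, which lies in $V_1$, so $V_1 \neq 0$; moreover, $T^k - 1$ is invertible on $V_1$ for every $k \neq 0$. Let $\pi \colon V \to V_1$ be the $T$-equivariant projection. When $[x_0,\ldots,x_r] = 0$ and all $k_i \neq 0$, applying $\pi$ to the formula above and inverting the invertible-on-$V_1$ factors forces the linear constraint $\pi(b_1) = F(k_0,k_1)\pi(b_0)$ for an explicit $F(k_0,k_1) \in \mathrm{GL}(V_1)$.

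We now estimate $|\mathcal{N}_r(G) \cap B_Y(n)^{r+1}|$ by splitting into cases. Tuples in which some $x_i$ lies in $A$ contribute at most $(r+1)|A \cap B_Y(n)|\cdot|B_Y(n)|^r$, which is exponentially negligible relative to $|B_Y(n)|^{r+1}$ by \cref{t:A}. For tuples with all $k_i \neq 0$, setting $M_k := \{b \in A : |bt^k|_Y \leq n\}$, the constraint $\pi(b_1) = F(k_0,k_1)\pi(b_0)$ restricts $b_1$ to a single coset of $A \cap V_0 = \ker(\pi|_A)$ in $A$; hence the count for fixed $(k_0,\ldots,k_r)$ is at most $|M_{k_0}|\cdot|(c + A \cap V_0) \cap M_{k_1}|\cdot\prod_{i=2}^r|M_{k_i}|$ for a representative $c \in A$, and choosing $c$ to minimise its word length shows the middle factor is at most $|A \cap V_0 \cap B_Y(2n)|$.

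The main obstacle is the sub-claim that $|A \cap V_0 \cap B_Y(n)|$ grows only polynomially in $n$. Since $\det T|_{V_0}$ is a rational number of absolute value $1$ (hence $\pm 1$), the lattice $L := \Z^m \cap V_0$ satisfies $TL = L$, so $A \cap V_0 = L$ is finitely generated. Because $T|_L$ is an integer matrix with roots-of-unity eigenvalues, some power $T^d|_L$ is unipotent, and the subgroup $H := L \rtimes \langle t \rangle \leq G$ is virtually nilpotent. Crucially, $H$ sits in $G$ with polynomial distortion: $t$-conjugation only provides polynomial (not exponential) speed-up along $V_0$, as $\|T^k|_{V_0}\|$ is polynomial in $|k|$. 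Granted the sub-claim, combining the above estimates and summing over $(k_0,\ldots,k_r)$ with $|k_i| \leq Cn$ yields $|\mathcal{N}_r(G) \cap B_Y(n)^{r+1}| \leq \mathrm{poly}(n)\cdot|B_Y(n)|^r$; dividing by $|B_Y(n)|^{r+1}$ produces the required exponentially small ratio by the exponential growth of $G$.
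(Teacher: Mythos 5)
Your proof takes a genuinely different route from the paper's. The paper establishes a general counting lemma about iterated commutators (\cref{l:gennilp}) and reduces to showing that the set $\mathcal{N} = \bigcup_i Z_i(H)$ (for a suitable finite-index subgroup $H \leq G$) and the `partial centraliser' sets $\{h \in G : [g,h] \in \mathcal{N}\}$ for $g \notin A$ have polynomial growth in $Y$-balls (\cref{l:ZHinA,l:CGig}). You instead exploit the metabelian structure directly: the closed-form expression for $[x_0,\ldots,x_r]$, which you derive correctly, together with the $T$-invariant $\Q$-rational splitting $A\otimes\Q = V_0\oplus V_1$, lets you count constrained tuples in one go, with no induction. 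It is worth noting that your $L = A\cap V_0$ coincides with the paper's $\mathcal{N}$ (compare with \eqref{e:whatisz} and the choice of $N$ there), so the two approaches ultimately isolate the same polynomial-growth subset; your route just gets there more quickly by using the commutator formula instead of the upper central series. One thing the paper's route buys is that \cref{l:gennilp} is stated for arbitrary $G$ and $\mathcal{N}$, so it could in principle transfer beyond the metabelian case, whereas your formula $[x_0,\ldots,x_r] = T^{-k_0-k_1}\prod_{i\geq 2}(T^{-k_i}-1)\bigl[(T^{k_0}-1)b_1-(T^{k_1}-1)b_0\bigr]$ is specific to groups with abelian derived subgroup.

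The one step you should flesh out is the sub-claim that $|L \cap B_Y(n)|$ grows polynomially in $n$: this is exactly the content of the paper's \cref{l:ZHinA} and is the most technically demanding part of the whole argument. You correctly identify the mechanism — $TL = L$ since $\det(T|_{V_0}) = \pm1$, and $\|T^k|_{V_0}\|$ is polynomial in $|k|$ because some power of $T|_{V_0}$ is unipotent — but asserting ``polynomial distortion'' is not quite the same as proving the ball bound. To make it rigorous, one must decompose $\varphi(g) = \sum_i T^{\tau(y_1\cdots y_{i-1})}\varphi(y_it^{-\tau(y_i)})$ along a geodesic word $y_1\cdots y_s$ representing $g\in L$, observe that the heights $|\tau(y_1\cdots y_{i-1})|$ are bounded by a constant multiple of $n$, project the identity onto $V_0$ (using that the projection commutes with $T$), and apply the polynomial operator-norm bound on $V_0$ termwise; then bound the number of points of the lattice $L$ inside a polynomially large ball. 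The paper carries this out via a direct Jordan-form computation. With that detail supplied, your argument is sound and does yield the exponentially small ratio $(2cn+1)\,|L\cap B_Y(2n)|/|B_Y(n)| + (r+1)\beta^{-n}$ as claimed.
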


We prove \cref{t:dn} in \cref{s:dn}.

\begin{rmk} \label{r:notunif}
To show that \cref{t:dn} does not follow directly from \cite[Theorem 1.8]{mtvv}, consider the following example. Let $T = \begin{pmatrix} 2 & 0 \\ 0 & 1 \end{pmatrix}$, and let
\[
G = G\left( 2,T \right) = \langle a_1,a_2,t \mid [a_1,a_2] = [t,a_2] = 1, ta_1t^{-1} = a_1^2 \rangle \cong BS(1,2) \times \Z.
\]
Let $X = \{ a_1,a_2,t \}$ be the standard generating set. Let $H = \langle a_1,t \rangle \leq G$, so that $G = H \times \langle a_2 \rangle$ and $H \cong BS(1,2)$. It is known that if $B_{\{a_1,t\}}(n)$ is a ball in $H$ with respect to $\{a_1,t\}$ of radius $n$, then $B_{\{a_1,t\}}(n)/2^n \to C$ for some $C \in (0,\infty)$: see \cite{ceg}. A calculation then shows that $\limsup_{n \to \infty} \gamma_n^X(H) = \frac{1}{3} > 0$, even though $H$ has infinite index in $G$. In particular, the sequence of measures $(\gamma_n^X)$ does not measure index uniformly, and so we cannot apply \cite[Theorem 1.8]{mtvv} in this case.
\end{rmk}

\section{Classification of the groups \texorpdfstring{$G(m,T)$}{G(m,T)}} \label{s:Ggrowth}

In this section we specify which groups \cref{t:A,t:dn} can be applied to. Specifically, we notice that any group $G(m,T)$ is either virtually nilpotent or has exponental growth; by Gromov's Polynomial Growth Theorem \cite{gromov81}, this is just saying that there are no groups $G(m,T)$ of intermediate growth. More precisely, we give a necessary and sufficient condition on the matrix $T$ for the group $G(m,T)$ to have exponential growth (see \cref{p:Ggrowth}).

For the rest of the paper, fix an integer $m \in \N$ and an $m \times m$ matrix $T$, and write $G = G(m,T)$ and $A = A(m,T)$. Let $\lambda_1,\ldots,\lambda_m \in \C$ be the eigenvalues of $T$ (counted with multiplicity), and suppose without loss of generality that $|\lambda_1| \leq \cdots \leq |\lambda_m|$. Let $\lambda = |\lambda_m|$, and note that since
\[ \lambda^m \geq \prod_{i=1}^m |\lambda_i| = |\det T| \geq 1, \]
we have $\lambda \geq 1$, with equality if and only if $|\lambda_i| = 1$ for all $i$.

The following two lemmas will be used in the proofs of \cref{p:Ggrowth,t:A}. The first of these is easy to check and its proof is left as an exercise.

\begin{lem} \label{l:phi}
The map $\varphi$ defined by setting $\varphi(t^n\bfa^\bfu t^{-n}) = T^n\bfu$ for $n \in \Z$ and $\bfu \in \Z^m$ can be extended to an injective homomorphism $\varphi: A \to \Q^m$. \qed
\end{lem}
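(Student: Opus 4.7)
The plan is to realise the elliptic subgroup $A$ as a direct limit of copies of $\Z^m$ under the endomorphism $T$, and then use invertibility of $T$ over $\Q$ to define the embedding into $\Q^m$.

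First, I would recall from \cref{d:GmT} that every element of $G$ can be written as $t^{-r}\bfa^\bfu t^s$ with $r,s \geq 0$. If $g \in A = \ker\tau$ then $s = r$, so $A = \bigcup_{r \geq 0} t^{-r} H t^r$, where $H = \langle a_1, \ldots, a_m\rangle$; this is an increasing union because $T(\Z^m) \subseteq \Z^m$ forces $t^{-r}Ht^r \subseteq t^{-(r+1)}Ht^{r+1}$. Define $\psi_r \colon \Z^m \to A$ by $\psi_r(\bfu) = t^{-r}\bfa^\bfu t^r$. Each $\psi_r$ is an injective homomorphism: additivity is immediate from commutativity of the $a_i$, and injectivity reduces to the fact that the base group of an HNN-extension embeds in the extension (a standard consequence of Britton's lemma). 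The defining relation $\bfa^\bfu = t^{-1}\bfa^{T\bfu} t$ gives $\psi_r = \psi_{r+1} \circ T$, so $A$ is realised as the direct limit $\varinjlim(\Z^m \xrightarrow{T} \Z^m \xrightarrow{T} \cdots)$.

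Next, I would construct $\varphi$ using the universal property of this direct limit. Since $\det T \neq 0$, the matrix $T$ is invertible over $\Q$, and the maps $T^{-r}\colon \Z^m \to \Q^m$ (viewing $\Z^m \subseteq \Q^m$) are compatible: $T^{-(r+1)}\circ T = T^{-r}$. They therefore induce a unique homomorphism $\varphi\colon A \to \Q^m$ with $\varphi \circ \psi_r = T^{-r}$, i.e.\ $\varphi(t^{-r}\bfa^\bfu t^r) = T^{-r}\bfu$. To match the statement, note that for $n \geq 0$ the HNN relation gives $t^n\bfa^\bfu t^{-n} = \bfa^{T^n\bfu}$, whence $\varphi(t^n\bfa^\bfu t^{-n}) = T^n\bfu$; for $n = -r < 0$ the formula reads $\varphi(t^{-r}\bfa^\bfu t^r) = T^{-r}\bfu = T^n\bfu$, as required.

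Injectivity is essentially built into the construction: any $g \in A$ equals $\psi_r(\bfu)$ for some $r \geq 0$ and $\bfu \in \Z^m$, and $\varphi(g) = T^{-r}\bfu = 0$ forces $\bfu = 0$ (since $T^{-r}$ is a $\Q$-linear isomorphism), hence $g = 1$. The only step that is not a purely formal direct-limit computation is injectivity of each $\psi_r$, which I would dispatch by invoking the standard embedding of the base group of an HNN-extension, so I do not anticipate any serious obstacle.
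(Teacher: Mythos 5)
Your proof is correct. The paper explicitly leaves this lemma as an exercise (``is easy to check and its proof is left as an exercise''), and your argument---writing $A$ as the ascending union $\bigcup_{r \geq 0} t^{-r}\langle a_1,\ldots,a_m\rangle t^r$, identifying it with $\varinjlim(\Z^m \xrightarrow{T} \Z^m \xrightarrow{T} \cdots)$, and invoking the universal property together with $\Q$-invertibility of $T$---is precisely the natural one the author intends the reader to supply; all the details (compatibility $\psi_r = \psi_{r+1}\circ T$, the compatibility $T^{-(r+1)}\circ T = T^{-r}$ of the target maps, injectivity of each $\psi_r$ via Britton's lemma, and injectivity of $\varphi$ from $\Q$-invertibility of $T$) are in place.
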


The next Lemma allows us to construct exponentially many elements in $G$ of given word-length.

\begin{lem} \label{l:geps}
If $|\lambda_i| \neq 1$ for some $i$, then there exist constants $R \in \N$ and $j \in \{ 1,\ldots,m \}$ with the following property. For any $k \in \N$ and $\bfeps = (\varepsilon_0,\ldots,\varepsilon_k) \in \{0,1\}^{k+1}$, let
\[ g_{\bfeps} = a_j^{\varepsilon_0} t^R a_j^{\varepsilon_1} t^R \cdots t^R a_j^{\varepsilon_k} \in G. \]
Then $g_{\bfeps} \neq g_{\hat\bfeps}$ for any two distinct elements $\bfeps,\hat\bfeps \in \{0,1\}^{k+1}$.
\end{lem}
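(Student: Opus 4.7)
My plan is to convert each $g_\bfeps$ into the normal form $\bfa^{\bfv(\bfeps)} t^{kR}$ and reduce injectivity of $\bfeps \mapsto g_\bfeps$ to a polynomial identity in the largest eigenvalue of $T$. Using the defining relation in the form $t^R \bfa^\bfu = \bfa^{T^R \bfu} t^R$, I would push all occurrences of $t^R$ in the word defining $g_\bfeps$ to the right. A straightforward telescoping induction gives
$$g_\bfeps = \bfa^{\bfv(\bfeps)} t^{kR} \qquad \text{with} \qquad \bfv(\bfeps) = \sum_{i=0}^{k} \varepsilon_i T^{iR} \bfe_j \in \Z^m.$$
Since the subgroup $\langle a_1, \ldots, a_m\rangle$ embeds into $G$ as $\Z^m$ (by the standard HNN-extension normal form, or equivalently by \cref{l:phi}), an equality $g_\bfeps = g_{\hat\bfeps}$ (with the same $k$) is equivalent to $\sum_{i=0}^k c_i T^{iR} \bfe_j = \bfzero$ in $\Z^m$, where $c_i := \varepsilon_i - \hat\varepsilon_i \in \{-1, 0, 1\}$.

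Next I would fix $j$ and $R$ using the spectrum of $T$. The hypothesis that some $|\lambda_i| \neq 1$, combined with $\prod_i |\lambda_i| = |\det T| \geq 1$, forces the existence of an eigenvalue $\lambda$ of $T$ with $|\lambda| > 1$. Take a left eigenvector $w \in \C^m$ satisfying $w^T T = \lambda w^T$, and pick $j$ so that the $j$-th coordinate $w_j$ of $w$ is nonzero; such $j$ exists because $w \neq \bfzero$. Applying $w^T$ to the relation $\sum_i c_i T^{iR} \bfe_j = \bfzero$ and dividing by $w_j$ converts it into the scalar identity $\sum_{i=0}^k c_i (\lambda^R)^i = 0$.

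To conclude, I would choose $R \in \N$ with $|\lambda|^R \geq 2$, which exists since $|\lambda| > 1$, and argue by contradiction: if some $c_i$ were nonzero, take $k$ maximal with $c_k \neq 0$. Then $|c_k (\lambda^R)^k| = |\lambda|^{kR}$, whereas the triangle inequality and the geometric series bound
$$\left| \sum_{i<k} c_i (\lambda^R)^i \right| \leq \sum_{i=0}^{k-1} |\lambda|^{iR} = \frac{|\lambda|^{kR} - 1}{|\lambda|^R - 1} < \frac{|\lambda|^{kR}}{|\lambda|^R - 1},$$
which would force $|\lambda|^R < 2$, a contradiction. Hence every $c_i = 0$, i.e., $\bfeps = \hat\bfeps$.

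I do not expect any serious obstacle: once the correct $\lambda$, $j$ and $R$ have been identified, the injectivity is essentially a one-line triangle-inequality estimate. The only mildly delicate point is in the rewriting step, where one must only ever use the defining relation in the ascending direction $t^R \bfa^\bfu = \bfa^{T^R\bfu} t^R$ (valid for every $\bfu \in \Z^m$) rather than the descending direction $t^{-R} \bfa^\bfv t^R = \bfa^{T^{-R}\bfv}$, which would require $\bfv \in T^R(\Z^m)$, since the HNN-extension is only ascending.
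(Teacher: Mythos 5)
Your proposal is correct and takes a genuinely cleaner route than the paper's. The paper also reduces to showing $\sum_i c_i T^{iR}\bfe_j \neq \bfzero$ for $c_i \in \{-1,0,1\}$ not all zero (its route is via the homomorphism $\varphi$ of \cref{l:phi} applied to $g_\bfeps g_{\hat\bfeps}^{-1}$, which is equivalent to your normal-form rewriting since $\langle a_1,\ldots,a_m\rangle \hookrightarrow G$ and $\varphi(\bfa^\bfu) = \bfu$). Where you diverge is in establishing that inequality: the paper passes to the Jordan normal form of $T^S$, builds an $\ell_\infty$-type norm through the conjugating matrix $P$, carefully isolates the largest Jordan block of the dominant eigenvalues, and estimates $\|T^{nS}\bfe_j\|$ asymptotically by $\binom{nS}{y-1}\lambda^{nS-y+1}$ to get the ratio bound $\|T^{kR}\bfe_j\|/\|T^{(k-1)R}\bfe_j\| \geq 2$. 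You instead pair with a single left eigenvector $w$ for an eigenvalue $\lambda$ of modulus greater than $1$, which collapses the vector identity $\sum_i c_i T^{iR}\bfe_j = \bfzero$ to the scalar identity $\sum_i c_i (\lambda^R)^i = 0$, and then a one-line geometric-series estimate finishes it once $|\lambda|^R \geq 2$. Your observation that the hypothesis "some $|\lambda_i| \neq 1$" together with $|\det T| \geq 1$ forces an eigenvalue of modulus strictly greater than $1$ is correct, as is the choice of $j$ with $w_j \neq 0$, and the left-eigenvector trick sidesteps Jordan blocks entirely: $w^T T^n = \lambda^n w^T$ holds regardless of nilpotent parts. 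The paper's approach proves a slightly stronger quantitative ratio bound on $\|T^{kR}\bfe_j\|$, but that extra information is not used anywhere, so your argument is a genuine simplification while proving exactly what is needed.
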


\begin{proof}
Let $\varphi: A \to \Q^m$ be as in \cref{l:phi}. For $g_{\bfeps},g_{\hat\bfeps}$ as in the statement, we have
\[ \varphi(g_{\bfeps}g_{\hat\bfeps}^{-1}) = (\varepsilon_0-\hat\varepsilon_0) \bfe_j + (\varepsilon_1-\hat\varepsilon_1) T^R\bfe_j + \cdots + (\varepsilon_k-\hat\varepsilon_k) T^{kR}\bfe_j, \]
where $\{ \bfe_i \mid 1 \leq i \leq n \}$ is the standard basis for $\Z^m$, so that $a_i = \bfa^{\bfe_i}$. Thus, by injectivity of $\varphi$, it is enough to find $R \in \N$ and $j \in \{ 1,\ldots,m \}$ such that
\[
\| T^{kR} \bfe_j \| > \sum_{i=0}^{k-1} \| T^{iR} \bfe_j \|
\]
for all $k \in \N$ with a suitable choice of norm $\|\cdot\|$. In particular, it is enough to require
\begin{equation} \label{e:Tsk}
\frac{\| T^{kR} \bfe_j \|}{\| T^{(k-1)R} \bfe_j \|} \geq 2
\end{equation}
for all $k \in \N$. We will use Jordan normal forms to define a norm $\| \cdot \|$ and to approximate $\| T^{kR} \bfe_j \|$ for $k$ large.

Let $x \in \{ 1,\ldots,m-1 \}$ be such that $|\lambda_x| < |\lambda_{x+1}| = |\lambda_{x+2}| = \cdots = |\lambda_m|$ (and $x = 0$ if $|\lambda_1| = |\lambda_m|$). Let $S \in \N$ be such that both $|\lambda_x|^S + 1 \leq \frac{1}{2} |\lambda_m|^S$ and $|\lambda_m|^S \geq 2$, and consider the Jordan normal form $PT^SP^{-1}$ for $T^S$ (where $P \in GL_m(\C)$): it is a block-diagonal matrix with blocks $X,Y_1,\ldots,Y_z$ where
\begin{equation*}
X = \begin{pmatrix}
\lambda_1^S & \alpha_1 \\
& \lambda_2^S & \ddots \\
&& \ddots & \alpha_{x-1} \\
&&& \lambda_x^S
\end{pmatrix} \in GL_x(\C)
\qquad\text{and}\qquad
Y_i = \begin{pmatrix}
\hat\lambda_i^S & 1 \\
& \hat\lambda_i^S & \ddots \\
&& \ddots & 1 \\
&&& \hat\lambda_i^S
\end{pmatrix} \in GL_{y_i}(\C)
\end{equation*}
for some $\alpha_1,\ldots,\alpha_{x-1} \in \{ 0,1 \}$ and some $y_1,\ldots,y_z \in \N$ where, without loss of generality, $y_1 \leq \cdots \leq y_z$, and where $|\hat\lambda_i| = |\lambda_m|$. Furthermore, let $w \in \{1,\ldots,z-1\}$ be such that $y_w < y_{w+1} = \cdots = y_z$ (and $w = 0$ if $y_1 = y_z$), let $\lambda = |\lambda_m|$ and let $y = y_z$. Define a norm on $\C^m$ by
\[ \| \bfu \| = \| P\bfu \|_\infty = \max \{ |(P\bfu)_i| \mid 1 \leq i \leq m \}, \]
and let $j \in \{1,\ldots,m\}$ be such that the last entry of $P\bfe_j$ is non-zero (such a choice is possible since the $P\bfe_i$ span $\C^m$). The idea is now to approximate $\| T^{nS}\bfe_j \|$ by a constant multiple of $\genfrac(){0pt}{1}{nS}{y-1} \lambda^{nS-y+1}$ when $n$ is large.

Since the $\ell_1$-norm of any row of $X$ is at most $|\lambda_x|^S+1 \leq \frac{\lambda^S}{2}$, we get $\| X\bfu \|_\infty \leq \frac{\lambda^S}{2} \|\bfu\|_\infty$ for any $\bfu \in \C^x$, and so $\| X^n\bfu \|_\infty \leq \left(\frac{\lambda^S}{2}\right)^n \|\bfu\|_\infty$. Moreover, we have
\[
Y_i^n = \begin{pmatrix}
\hat\lambda_i^n & n \hat\lambda_i^{n-1} & \genfrac(){0pt}{1}{n}{2} \hat\lambda_i^{n-2} & \cdots & \genfrac(){0pt}{1}{n}{y_i-1} \hat\lambda_i^{n-y_i+1} \\
& \hat\lambda_i^n & n \hat\lambda_i^{n-1} & \cdots & \genfrac(){0pt}{1}{n}{y_i-2} \hat\lambda_i^{n-y_i+2} \\
&& \ddots & \ddots & \vdots \\
&&& \hat\lambda_i^n & n \hat\lambda_i^{n-1} \\
&&&& \hat\lambda_i^n
\end{pmatrix}.
\]

Now let $w \in \{1,\ldots,z-1\}$ be such that $y_w < y_{w+1} = \cdots = y_z$ (with $w = 0$ if $y_1 = y_z$), and let $y = y_z$. Then the above calculations imply that if we denote
\[ PT^{nS}\bfe_j = (\beta_1^{(n)},\ldots,\beta_x^{(n)},\gamma_{1,1}^{(n)},\ldots,\gamma_{1,y_1}^{(n)},\ldots,\gamma_{z,1}^{(n)},\ldots,\gamma_{z,y_z}^{(n)}), \]
then
\begin{equation*}
\lim_{n \to \infty} \frac{|\beta_i^{(n)}|}{\genfrac(){0pt}{1}{nS}{y-1} \lambda^{nS-y+1}} = 0
\end{equation*}
and
\begin{equation*}
\lim_{n \to \infty} \frac{|\gamma_{i,l}^{(n)}|}{\genfrac(){0pt}{1}{nS}{y-1} \lambda^{nS-y+1}} = \begin{cases} |\gamma_{i,y_i}^{(0)}| & \text{if } l = 1 \text{ and } i > w, \\ 0 & \text{otherwise}. \end{cases}
\end{equation*}
Since by the choice of $j$ we have $\gamma_{z,y_z}^{(0)} \neq 0$, it follows that there exists a constant $n_0 \in \N$ such that for $n \geq n_0$ we have
\begin{equation*}
\frac{1}{2} \leq \frac{\| T^{nS}\bfe_j \|}{\genfrac(){0pt}{1}{nS}{y-1} \lambda^{nS-y+1} \max \{ |\gamma_{i,y_i}^{(0)}| \mid w+1 \leq i \leq z \}} \leq \frac{3}{2}.
\end{equation*}
By increasing $n_0$ further, we may also assume that $n_0 \geq 3$ and $\| T^{n_0S}\bfe_j \| \geq 2 \| \bfe_j \|$. Let $R = n_0S$. Then, since $|\lambda_x|^S+1 \leq \frac{\lambda^S}{2}$ and in particular $\lambda^S \geq 2$, we have
\[
\frac{\| T^{kR}\bfe_j \|}{\| T^{(k-1)R}\bfe_j \|} \geq \frac{\frac{1}{2}\genfrac(){0pt}{1}{kn_0S}{y-1} \lambda^{kn_0S-y+1}}{\frac{3}{2}\genfrac(){0pt}{1}{(k-1)n_0S}{y-1} \lambda^{(k-1)n_0S-y+1}} \geq \frac{\lambda^{n_0S}}{3} \geq \frac{\lambda^{3S}}{3} \geq \frac{8}{3} > 2
\]
for all $k \geq 2$, and also, by assumption on $n_0$, $\| T^{R}\bfe_j \| / \| \bfe_j \| \geq 2$. Thus \eqref{e:Tsk} holds, as required.
\end{proof}

\begin{proof}[Proof of \cref{p:Ggrowth}]
\begin{enumerate}[(i)]
\item \label{i:pexp0} We proceed by induction on $m$. If $m = 0$, then $G(m,T) \cong \Z$ is abelian, so nilpotent.

Let $\bfv = (v_1,\ldots,v_m) \in \ker (T-I)$ be a non-zero element. Since $T-I$ has integer entries, we can pick $\bfv$ in such a way that $v_i/v_j \in \Q$ whenever $v_j \neq 0$. Thus, by rescaling $\bfv$ if necessary, we may assume that $v_i \in \Z$ for all $i$ and $\gcd(v_1,\ldots,v_m) = 1$. Thus $\bfv$ is a direct summand of $\Z^m$, so after a change of basis we may assume that $\bfv = \bfe_m$.

It follows that
\[
T = \begin{pmatrix} \hat{T} & \bfzero \\ \bfu^T & 1 \end{pmatrix}
\]
for some $(m-1) \times (m-1)$ matrix $\hat{T}$ with integer entries and some $\bfu \in \Z^{m-1}$. Note that $\chi_T(x) = (x-1)\chi_{\hat{T}}(x)$, where $\chi_U(x) = \det(xI-U)$ for a matrix $U$, and so all eigenvalues of $\hat{T}$ are equal to $1$. Then it is easy to check that
\[ G(m,T)/\langle a_m \rangle \cong G(m-1,\hat{T}) \]
and so $G(m,T)/\langle a_m \rangle$ is nilpotent by induction hypothesis.

Now we have $[a_m,a_i] = 1$ for all $i$ and $ta_m t^{-1} = \bfa^{T\bfe_m} = \bfa^{\bfe_m} = a_m$, so the element $a_m$ is central in $G(m,T)$. Since $G(m,T)/\langle a_m \rangle$ is nilpotent, it follows that $G(m,T)$ is nilpotent as well.

\item A theorem by Kronecker \cite{kronecker}, whose proof we sketch here, shows that if a monic polynomial $p$ has integer coefficients and all roots on the unit circle, then all roots of $p$ are roots of unity. Indeed, if $p(X) = \prod_{i=1}^m (X-\lambda_i)$ is such a polynomial, then the coefficients of $p$ are symmetric polynomials in the $\lambda_i$ that generate (over $\Q$) the subalgebra of $\Q[\lambda_1,\ldots,\lambda_m]$ consisting of all symmetric polynomials. It follows that for any $n \in \N$, the polynomial $p_n(X) = \prod_{i=1}^m (X-\lambda_i^n)$ has rational coefficients; but as the $\lambda_i$ are algebraic integers, so are the coefficients of $p_n$, and so the coefficients of $p_n$ are integers. Since $|\lambda_i| = 1$ for all $i$, the coefficient of $X^k$ in $p_n$ is bounded by $\genfrac(){0pt}{1}{m}{k}$ for each $k \in \{ 0,\ldots,m \}$. It follows that the set $\{ p_n \mid n \in \N \}$ contains only finitely many polynomials. Thus there exists a polynomial $\hat{p}(X) = \prod_{i=1}^m (X-\nu_i)$ and an infinite subset $I \subseteq \N$ such that $p_n(X) = \hat{p}(X)$ for all $n \in I$. This means that for all $n \in I$, there exists a permutation $\sigma = \sigma_n \in \Sym \{ 1,\ldots,m \}$ such that $\lambda_i^n = \nu_{\sigma(i)}$ for each $i$. Since $\Sym \{ 1,\ldots,m \}$ is finite, there exist two distinct elements $n_1,n_2 \in I$ such that $\sigma_{n_1} = \sigma_{n_2}$. This implies that $\lambda_i^{n_1} = \lambda_i^{n_2}$ for each $i$, and so $\lambda_i$ is $|n_1-n_2|$-th root of unity, as required.

Since the polynomial $\chi_T$ has integer coefficients and all roots on the unit circle, the argument above shows that there exists $n \in \N$ such that all eigenvalues of $T^n$ are equal to $1$. Define a map $\tau_n: G(m,T) \to \Z/n\Z$ by setting $\tau_n(a_i) = 0$ for all $i$ and $\tau_n(t) = 1$: that is, $\tau_n$ is $\tau$ followed by reduction modulo $n$, where $\tau$ is as in \cref{d:GmT}. Now $\ker \tau_n \cong G(m,T^n)$ is nilpotent by part \eqref{i:pexp0} and has index $n$ in $G(m,T)$, and so $G(m,T)$ is virtually nilpotent, as required.

\item It is enough to show that $A(m,T)$ has exponential growth in $G(m,T)$. This follows easily from \cref{l:geps}. Indeed, given an integer $k$ the set
\[
\{ g_{\bfeps}t^{-kR} \mid \bfeps \in \{0,1\}^{k+1} \}
\]
contains $2^{k+1}$ distinct elements of $A(m,T)$, and each of these elements has word length at most $k+1+2kR \leq (k+1)(2R+1)$ over $X = \{ a_1,\ldots,a_m,t \}$. Thus
\[
|A(m,T) \cap B_X((2R+1)(k+1))| \geq 2^{k+1}
\]
for all $k \in \N$, which implies
\[
\liminf_{n \to \infty} \frac{\log |A(m,T) \cap B_X(n)|}{n} \geq \frac{\log 2}{2R+1} > 0,
\]
as required. \qedhere
\end{enumerate}
\end{proof}

\section{Negligibility of the elliptic subgroup} \label{s:A}

In this section we prove \cref{t:A}. As a consequence of \cref{p:Ggrowth}, we restrict to matrices $T$ that have eigenvalues of absolute value not equal to $1$.

Let $G = G(m,T)$ and $A = A(m,T)$. By \cref{p:Ggrowth}, there exists a constant $\alpha > 1$ such that $|A \cap B_Y(n)| \geq \alpha^n$ for all sufficiently large $n$. Fix $\beta \in (1,\alpha^{1/2m})$, and let $d = \det T$. Let $Y$ be a finite generating set for $G$.

Let $\varphi: A \to \Q^m$ be as in \cref{l:phi}. Define the sets
\[ \mathcal{Z}_+ = \{ g \in A \mid \beta^{|g|_Y} \leq \|\varphi(g)\|_\infty \} \]
and
\[ \mathcal{Z}_- = \begin{cases} \{ g \in A \mid d^{\lfloor |g|_Y \log\beta/\log |d| \rfloor}\varphi(g) \notin \Z^m \} & \text{if } |d| \geq 2, \\ \varnothing & \text{if } |d| = 1, \end{cases} \]
and let $\mathcal{Z} = \mathcal{Z}_+ \cup \mathcal{Z}_-$.

\begin{lem} \label{l:Zgen}
Elements of $\mathcal{Z}$ are generic in $A$ with respect to $Y$: that is, $\frac{|\mathcal{Z} \cap B_Y(n)|}{|A \cap B_Y(n)|} \to 1$ as $n \to \infty$.
\end{lem}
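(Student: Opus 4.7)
The plan is to establish the stronger estimate $|(A \setminus \mathcal{Z}) \cap B_Y(n)| \leq C \beta^{2mn}$ for some constant $C$. Combined with the hypothesis $|A \cap B_Y(n)| \geq \alpha^n$ (for $n$ large) and the defining inequality $\beta^{2m} < \alpha$, this yields $|(A \setminus \mathcal{Z}) \cap B_Y(n)| / |A \cap B_Y(n)| \leq C (\beta^{2m}/\alpha)^n$, which decays to $0$ exponentially, in particular giving the required ratio convergence.

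The key observation is that the two exceptional sets $\mathcal{Z}_+$ and $\mathcal{Z}_-$ control complementary aspects of the image $\varphi(g) \in \Q^m$. For $g \in (A \setminus \mathcal{Z}) \cap B_Y(n)$, the condition $g \notin \mathcal{Z}_+$ bounds the numerator: $\|\varphi(g)\|_\infty < \beta^{|g|_Y} \leq \beta^n$. The condition $g \notin \mathcal{Z}_-$ bounds the denominator: setting $K = \lfloor n \log\beta / \log|d| \rfloor$ when $|d|\geq 2$, the monotonicity of $\lfloor \cdot \rfloor$ combined with the closure $d\Z^m \subseteq \Z^m$ gives $d^K \varphi(g) \in \Z^m$. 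In the degenerate case $|d|=1$, Cramer's rule shows that $T^{-1}$ has integer entries, so $\varphi(A) \subseteq \Z^m$ already and one may take $K=0$.

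Combining the two constraints, $d^K \varphi(g)$ is a lattice point in $\Z^m$ of $\ell_\infty$-norm at most $|d|^K \beta^n \leq \beta^n \cdot \beta^n = \beta^{2n}$, where the middle inequality uses $|d|^K \leq |d|^{n\log\beta/\log|d|} = \beta^n$ (trivially satisfied when $|d|=1$). The number of such lattice points is at most $(2\lfloor \beta^{2n} \rfloor + 1)^m = O(\beta^{2mn})$. Since $\varphi$ is injective by \cref{l:phi} and $d^K$ is a fixed nonzero scalar, each such lattice point corresponds to at most one $g \in A$, yielding the desired bound on $|(A\setminus\mathcal{Z}) \cap B_Y(n)|$.

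The one bookkeeping subtlety, which I expect to be the only real care point, is that both conditions defining $\mathcal{Z}$ depend on $|g|_Y$ rather than on $n$; uniformity over $g \in B_Y(n)$ needs to be recovered before the counting argument applies. This is precisely what the monotonicity step above provides: if $d^{K_g}\varphi(g) \in \Z^m$ with $K_g = \lfloor |g|_Y \log\beta/\log|d|\rfloor \leq K$, then $d^K \varphi(g) = d^{K-K_g}(d^{K_g}\varphi(g)) \in \Z^m$ as well. No further obstacles are anticipated.
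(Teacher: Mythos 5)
Your proposal is correct and follows essentially the same route as the paper: bound $|(A\setminus\mathcal{Z})\cap B_Y(n)|$ by counting points of $\Q^m$ whose norm is less than $\beta^n$ and whose $d^K$-multiple lies in $\Z^m$ (with $K=\lfloor n\log\beta/\log|d|\rfloor$, or $K=0$ when $|d|=1$), then divide by $\alpha^n$ and use $\beta^{2m}<\alpha$. The monotonicity observation you flag and the separate treatment of $|d|=1$ are exactly the two points the paper also handles.
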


\begin{proof}
Suppose first that $|d| \neq 1$. Note that for each $k_1 > 0$ and $k_2 \in \N$, the number of elements in the set
\[ \{ \bfu \in \Q^m \mid \|\bfu\|_\infty \leq k_1, k_2\bfu \in \Z^m \} \subseteq \Q^m \]
is at most $(2k_1k_2+1)^m \leq 3^mk_1^mk_2^m$. As $|d|^{\log\beta/\log |d|} = \beta$ and $\beta^{2m} < \alpha$, we get
\begin{align*}
\limsup_{n \to \infty} \frac{|(A \setminus \mathcal{Z}) \cap B_Y(n)|}{|A \cap B_Y(n)|} &\leq \limsup_{n \to \infty} \frac{|\{ \bfu \in \Q^m \mid \beta^n > \|\bfu\|_\infty, d^{\lfloor n \log\beta/\log |d| \rfloor}\bfu \in \Z^m \}|}{|A \cap B_Y(n)|} \\ &\leq 3^m \limsup_{n \to \infty} \left( \frac{\beta^{2m}}{\alpha} \right)^n = 0.
\end{align*}

If instead $|d| = 1$, then $T \in GL_n(\Z)$ and so $A = \{ \bfa^\bfu \mid \bfu \in \Z^n \}$, hence we obtain
\[ \limsup_{n \to \infty} \frac{|(A \setminus \mathcal{Z}) \cap B_Y(n)|}{|A \cap B_Y(n)|} \leq \limsup_{n \to \infty} \frac{|\{ \bfu \in \Z^m \mid \beta^n > \|\bfu\|_\infty \}|}{|A \cap B_Y(n)|} \leq 3^m \limsup_{n \to \infty} \left( \frac{\beta^m}{\alpha} \right)^n = 0, \]
as required.
\end{proof}

For a path $\omega = y_1 \cdots y_n$ (with $y_i \in Y$) in the Cayley graph $\Gamma(G,Y)$, we may consider its image in the quotient $G/A \cong \Z$. In particular, define the \emph{maximal height}, \emph{minimal height} and \emph{total height} of $\omega$ as the numbers
\[ \mathfrak{h}_+(\omega) = \max \{ \tau(y_1 \cdots y_i) \mid 0 \leq i \leq n \}, \]
\[ \mathfrak{h}_-(\omega) = \min \{ \tau(y_1 \cdots y_i) \mid 0 \leq i \leq n \} \]
and
\[ \mathfrak{h}(\omega) = \max\{ \mathfrak{h}_+(\omega),-\mathfrak{h}_-(\omega) \}, \]
respectively, where $\tau$ is as in \cref{d:GmT}. Note that $\mathfrak{h}_-(\omega) \leq 0 \leq \mathfrak{h}_+(\omega)$ for any word $\omega$.

\begin{lem} \label{l:highwords}
There exists a constant $\delta > 0$ such that if $\omega$ is a geodesic word representing $g \in \mathcal{Z}$ with $|g|_Y$ sufficiently large, then $\mathfrak{h}(\omega) \geq \delta |g|_Y$.
\end{lem}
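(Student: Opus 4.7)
The plan is to write $g$ in a normal form $g = t^{-p}\bfa^{\mathbf{w}} t^{p}$ (possible since $g \in A = \ker\tau$ forces the $t$-exponents on either side to match), from which $\varphi(g) = T^{-p}\mathbf{w}$, and then to bound both $p$ and $\|\mathbf{w}\|_\infty$ in terms of $h := \mathfrak{h}(\omega)$ and $n := |g|_Y$. First I would fix, for each $y \in Y$, normal-form data $y = t^{-r_y}\bfa^{\bfv_y} t^{s_y}$ with $r_y, s_y \in \Z_{\geq 0}$, and set $R = \max_{y \in Y} r_y$, $V = \max_{y \in Y} \|\bfv_y\|_\infty$. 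Applying these to the prefixes $g_i = y_1\cdots y_i$, I would inductively put $g_i = t^{-p_i}\bfa^{\mathbf{w}_i}t^{q_i}$.

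The induction step splits into two cases depending on the sign of $q_{i-1}-r_{y_i}$. In the ``upper'' case ($q_{i-1}\geq r_{y_i}$) the $t^{q_{i-1}}$ and $t^{-r_{y_i}}$ partially cancel, one pushes the remaining $t^{q_{i-1}-r_{y_i}}$ past $\bfa^{\bfv_{y_i}}$, and obtains $p_i = p_{i-1}$ with $\mathbf{w}_i = \mathbf{w}_{i-1} + T^{q_{i-1}-r_{y_i}}\bfv_{y_i}$; in the ``lower'' case ($q_{i-1}<r_{y_i}$) one trades $r_{y_i}-q_{i-1}$ of the left $t^{-1}$'s against conjugating $\mathbf{w}_{i-1}$ by $T$, yielding $p_i = p_{i-1}+r_{y_i}-q_{i-1}$ and $\mathbf{w}_i = T^{r_{y_i}-q_{i-1}}\mathbf{w}_{i-1} + \bfv_{y_i}$. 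The crucial bookkeeping step is to notice that $(p_i)$ is non-decreasing, and that at each ``lower'' step $p_i = r_{y_i} - \tau(g_{i-1})$; since $|\tau(g_{i-1})| \leq h$ and $r_{y_i} \leq R$, this gives $p_i \leq R+h$ throughout, and then $q_i - p_i = \tau(g_i) \in [-h,h]$ forces $q_i \leq 2R+2h$. Consequently every exponent of $T$ appearing in the recursion is non-negative and of size $O(h)$; unrolling the recursion as $\mathbf{w}_n = \sum_{i=1}^n T^{f_i}\bfv_{y_i}$ with $0 \leq f_i \leq C_1 h + C_2$ (where $f_i$ collects all subsequent $T$-multiplications applied to the $i$-th contribution), I would deduce $\|\mathbf{w}_n\|_\infty \leq V n L^{C_1 h + C_2}$, where $L = \max(\|T\|_{\mathrm{op}}, \|T^{-1}\|_{\mathrm{op}})$.

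Finally, since $p_n = q_n$, the identity $\varphi(g) = T^{-p_n}\mathbf{w}_n$ gives $\|\varphi(g)\|_\infty \leq L^{p_n}\|\mathbf{w}_n\|_\infty \leq C_3 n L^{C_4 h}$. If $g \in \mathcal{Z}_+$, the lower bound $\|\varphi(g)\|_\infty \geq \beta^n$ directly forces $h \geq \delta_+ n$ for some $\delta_+ > 0$ and all large enough $n$. If $g \in \mathcal{Z}_-$, I would use that $dT^{-1}$ is the adjugate of $T$ and hence has integer entries, so $d^{p_n}\varphi(g) = (dT^{-1})^{p_n}\mathbf{w}_n \in \Z^m$; the defining property of $\mathcal{Z}_-$ then forces $p_n > \lfloor n\log\beta/\log|d|\rfloor$, and combined with $p_n \leq R+h$ this yields $h \geq \delta_- n$. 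Taking $\delta = \min(\delta_+,\delta_-) > 0$ will complete the proof. The main obstacle I anticipate is the bookkeeping in the ``lower'' case of the recursion: verifying the bound $p_i \leq R+h$ requires checking that the formula $p_i = r_{y_i}-\tau(g_{i-1})$ holds regardless of how many previous ``lower'' steps have already occurred, and propagating the operator-norm estimates so that the exponent of $L$ in the final bound on $\|\mathbf{w}_n\|_\infty$ grows only linearly in $h$ (and not in $n$).
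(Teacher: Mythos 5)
Your proposal is correct and takes essentially the same approach as the paper: for $g\in\mathcal{Z}_-$ both arguments bound the power of $d$ needed to clear denominators in $\varphi(g)$ by $O(\mathfrak{h}(\omega))$, and for $g\in\mathcal{Z}_+$ both bound $\|\varphi(g)\|_\infty$ by $C\,|g|_Y\,L^{O(\mathfrak{h}(\omega))}$ with $L=\max\{\|T\|_{\mathrm{op}},\|T^{-1}\|_{\mathrm{op}}\}$, then invert. The paper's bookkeeping is slightly slicker --- it tracks $\varphi(y_1\cdots y_i\, t^{-\tau(y_1\cdots y_i)})$ for prefixes using the homomorphism and $t$-conjugation properties of $\varphi$ rather than the full normal-form triple $(p_i,\mathbf{w}_i,q_i)$, obtaining $L^{\mathfrak{h}(\omega)}$ in place of your $L^{3\mathfrak{h}(\omega)+O(1)}$ --- but this only changes constants and your $p_i=r_{y_i}-\tau(g_{i-1})$ bookkeeping does close the anticipated gaps.
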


\begin{proof}
Since each element of $Y$ can be expressed as $t^{-r} \bfa^\bfu t^s$ for some $\bfu \in \Z^m$ and $0 \leq r,s \leq c$, it is easy to see -- by induction on the length of $\omega$, say -- that $\varphi(g) \in d^{\mathfrak{h}_-(\omega)-c}\Z^m$. Thus if $g \in \mathcal{Z}_-$ (and so $\mathcal{Z}_- \neq \varnothing$, implying that $|d| \geq 2$) then we have
\[ \mathfrak{h}(\omega) \geq -\mathfrak{h}_-(\omega) > \frac{\log\beta}{\log |d|}|g|_Y - c \geq \frac{\log\beta}{2\log |d|}|g|_Y \]
if $|g|_Y \geq 2c\log|d|/\log\beta$.

Suppose now $g \in \mathcal{Z}_+$. Then we have
\begin{equation} \label{e:gY}
|g|_Y \leq \log \|\varphi(g)\|_\infty / \log \beta.
\end{equation}
Let $\omega = y_1 \cdots y_n$ with $y_i \in Y$, and let $L = \max \{ \|T\|_{op}, \|T^{-1}\|_{op} \}$, where $\| \cdot \|_{op}$ denotes the operator norm with respect to the $\ell_\infty$-norm on $\C^m$. Note that, since $G$ has exponential growth, $T$ has an eigenvalue $\lambda$ with $|\lambda| \neq 1$ by \cref{p:Ggrowth}, and so $L > 1$.

It is easy to show -- by induction on $i$, say -- that
\[ \| \varphi(y_1 \cdots y_i t^{-\tau(y_1 \cdots y_i)}) \|_\infty \leq L^{\mathfrak{h}(y_1 \cdots y_i)} \sum_{j=1}^i \| \varphi(y_jt^{-\tau(y_j)})\|_\infty \]
for $1 \leq i \leq n$, and hence
\begin{equation} \label{e:phig}
\| \varphi(g) \|_\infty \leq c_0 L^{\mathfrak{h}(\omega)} |g|_Y,
\end{equation}
where $c_0 = \max \{ \|\varphi(yt^{-\tau(y)})\|_\infty \mid y \in Y \}$. Combining \eqref{e:gY} and \eqref{e:phig} yields
\[ |g|_Y \leq \frac{1}{\log \beta} \left( \log c_0 + \mathfrak{h}(\omega)\log L + \log |g|_Y \right) \]
and hence
\[ \mathfrak{h}(\omega) \geq \frac{1}{\log L} \left( |g|_Y \log\beta - \log |g|_Y - \log c_0 \right). \]
Now if $|g|_Y$ is big enough then we have $\log |g|_Y + \log c_0 \leq \frac{\log \beta}{2} |g|_Y$; substituting this yields
\[ \mathfrak{h}(\omega) \geq \frac{\log\beta}{2\log L}|g|_Y, \]
so by setting $\delta = \log\beta / 2\log(\max\{L,|d|\})$ we are done.
\end{proof}

The next Lemma shows that there exists a particular subset $\mathcal{A} \subseteq \mathcal{Z}$ such that, given an element $g \in \mathcal{A}$, there are `many' words over $Y$ representing $g$ that are `not too long'. More specifically, for $n,p,q \in \N$ with $p \leq n$ and for $h \in \Z$, define
\[
\mathcal{A}(n) = \mathcal{A}_{p,h}(n) = \{ g_1 g_2 \mid g_1 \in t^hA \cap B_Y(p), g_2 \in t^{-h}A \cap B_Y(n-p) \} \cap \mathcal{Z},
\]
and define the map
\begin{align*}
\mu_{n,p,q,h}: [t^hA \cap B_Y(p+q)] \times [t^{-h}A \cap B_Y(n-p+q)] &\to A \cap B_Y(n+2q), \\
(g_1,g_2) &\mapsto g_1g_2.
\end{align*}
Furthermore, let $X = \{ a_1,\ldots,a_m,t \}$ be the standard generating set for $G$. Since the word metrics $|\cdot|_X$ and $|\cdot|_Y$ are bi-Lipschitz equivalent, there exists a constant $c \in \N$ such that
\[
|g|_X \leq c |g|_Y \qquad \text{and} \qquad |g|_Y \leq c|g|_X
\]
for all $g \in G$.

\begin{lem} \label{l:manywords}
Let $R \in \N$ be given by \cref{l:geps}. Then for any $\hat\delta > 0$, any $n,p,k \in \N$ with $p \leq n$ and $k \leq \hat\delta n / R$, any $h \in \Z$ with $|h| \geq \lfloor \hat\delta n \rfloor$, and any $g \in \mathcal{A}_{p,h}(n)$, we have
\[
|(\mu_{n,p,c(c+1)k,h})^{-1}(g)| \geq \genfrac(){0pt}{1}{\lfloor \hat\delta n / R \rfloor}{k}.
\]
\end{lem}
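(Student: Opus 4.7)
The plan is to construct, for each $k$-element subset $S \subseteq \{1, \ldots, N\}$ (where $N := \lfloor \hat\delta n / R \rfloor$), a pair $(g_1(S), g_2(S)) \in \mu_{n,p,q,h}^{-1}(g)$ with $q = c(c+1)k$, in such a way that distinct subsets give distinct pairs; this yields the required $\binom{N}{k}$ preimages. Since $g \in \mathcal{A}_{p,h}(n)$, first fix a base factorisation $g = g_1 g_2$ with $g_1 \in t^h A \cap B_Y(p)$ and $g_2 \in t^{-h} A \cap B_Y(n-p)$. Let $\omega_1, \omega_2$ be $Y$-geodesic words for $g_1, g_2$, and expand each $Y$-letter into its $X$-word to obtain $X$-words $\omega_1^X, \omega_2^X$ of $X$-length at most $cp$ and $c(n-p)$ respectively.

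Each element of $X = \{a_1,\ldots,a_m,t\}$ has $|\tau(\cdot)| \leq 1$, so the partial $\tau$-values along $\omega_1^X$ take every integer value between $0$ and $h$, and similarly for $\omega_2^X$. Assume $h > 0$; the case $h < 0$ is symmetric. Since $iR \leq NR \leq |h|$ for every $i \leq N$, we may choose positions $P_i^{(1)}$ in $\omega_1^X$ and $P_i^{(2)}$ in $\omega_2^X$ whose partial $\tau$-values (read in the concatenation $\omega_1^X \omega_2^X$) both equal $iR$. For each $k$-subset $S$, form modified $X$-words by inserting $a_j$ at $P_i^{(1)}$ in $\omega_1^X$ and $a_j^{-1}$ at $P_i^{(2)}$ in $\omega_2^X$ for every $i \in S$, and let $g_1(S), g_2(S)$ denote the elements they represent.

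The $i$-th pair of insertions lies at matching heights $iR$, so the intervening sub-word of the modified concatenation has $\tau$-value $0$ and hence belongs to $A$; since $a_j \in A$ and $A$ is abelian, the inserted $a_j$ commutes through to cancel its paired $a_j^{-1}$. Iterating from the innermost pair outward yields $g_1(S) g_2(S) = g$, while evidently $g_1(S) \in t^h A$ and $g_2(S) \in t^{-h} A$. To verify distinctness of the pairs, apply $\varphi$ from \cref{l:phi}: the insertion of $a_j$ at $P_i^{(1)}$ multiplies $g_1$ on the right by $w_i^{-1} a_j w_i$, where $w_i$ is the suffix of $\omega_1^X$ past $P_i^{(1)}$ (of $\tau$-value $h - iR$), contributing $T^{iR - h}\bfe_j$ to $\varphi(g_1^{-1} g_1(S))$. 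Summing gives $\varphi(g_1^{-1} g_1(S)) = T^{-h} \bfu_S$ with $\bfu_S := \sum_{i \in S} T^{iR}\bfe_j$. By \cref{l:geps} the $\bfu_S$ are pairwise distinct, so the elements $g_1(S)$ are too, and hence so are the pairs.

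The principal obstacle is bounding $|g_1(S)|_Y$ and $|g_2(S)|_Y$ within $p + q$ and $n - p + q$; the direct bi-Lipschitz estimate $|g_1(S)|_Y \leq c|g_1(S)|_X \leq c(cp + k)$ overshoots badly, with a spurious $c^2$ coefficient on $p$. The remedy is to avoid re-expanding the whole modified $X$-word, and instead work $Y$-letter by $Y$-letter: every $Y$-letter of $\omega_1$ whose $X$-expansion contains no insertion is retained verbatim, while an ``affected'' $Y$-letter whose $X$-expansion is pierced by $\ell \geq 1$ insertions is replaced by a $Y$-word realising an element of $X$-length $\leq c + \ell$ and hence of $Y$-length $\leq c(c + \ell)$; insertions landing at the boundary between consecutive $Y$-letters are written separately as $Y$-words of length $\leq c$. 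Summing over at most $k$ affected $Y$-letters with insertion counts totalling at most $k$ gives $|g_1(S)|_Y \leq p + (c^2 - 1)k + ck \leq p + q$, and a symmetric argument yields $|g_2(S)|_Y \leq n - p + q$.
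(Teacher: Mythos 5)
Your proof is correct and follows the same core strategy as the paper's: index the modified factorisations of $g$ by $k$-element subsets $S$ of $\{1,\ldots,\lfloor\hat\delta n/R\rfloor\}$, insert copies of $a_j^{\pm 1}$ at heights $iR$ for $i\in S$ so that they pair up and cancel by commutativity of $A$, preserve the product and the $\tau$-values, and distinguish the resulting pairs using $\varphi$ and \cref{l:geps}. Where you differ is in how the insertions are performed and how the $Y$-length is controlled. The paper inserts a geodesic $Y$-word for the conjugate $t^{-q_{i,1}}a_jt^{q_{i,1}}$ between two consecutive $Y$-letters of $\omega_1$, choosing the insertion point so that the partial $\tau$-value is within $c/2$ of $b_iR$; the correction $|q_{i,1}|\le c/2$ then gives each inserted fragment $X$-length at most $c+1$, hence $Y$-length at most $c(c+1)$, and the bound $|l_{B,1}|_Y\le p+c(c+1)k$ is immediate. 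You instead insert $a_j^{\pm 1}$ at the \emph{exact} required height inside the $X$-expansion, which is cleaner at the level of the group element (no conjugation correction, and $\varphi(g_1^{-1}g_1(S))=T^{-h}\sum_{i\in S}T^{iR}\bfe_j$ falls out directly), but then need the letter-by-letter accounting to avoid the spurious factor of $c^2$ when converting back to $Y$-length. Your bookkeeping is correct: with $a\le k$ affected $Y$-letters carrying $\sum_b\ell_b$ interior insertions and $\ell_0$ boundary insertions, $\sum_b\ell_b+\ell_0=k$, the total is at most $(p-a)+\sum_b c(c+\ell_b)+c\ell_0 = p+a(c^2-1)+ck\le p+c(c+1)k$. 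Both routes are valid and of comparable length; the paper's choice trades the exact-height cancellation for a one-line length estimate, while yours trades the one-line estimate for an exact-height cancellation.
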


\begin{proof}
Let $\mathcal{A} = \mathcal{A}_{p,h}(n)$ and $\mu = \mu_{n,p,c(c+1)k,h}$. Let $\mathcal{P}$ be the set of all subsets of $\{1,\ldots,\lfloor \hat\delta n / R \rfloor\}$ of cardinality $k$. We will find an injection $\mathcal{P} \to \mu^{-1}(g), B \mapsto (l_{B,1},l_{B,2})$, which will prove our claim.

Suppose first that $h > 0$. As $g \in \mathcal{A}$, there exists an expression $g = g_1g_2$, where $g_1 \in t^hA \cap B_Y(p)$ and $g_2 \in t^{-h}A \cap B_Y(n-p)$. Let $\omega_1 = y_1 \cdots y_r$ and $\omega_2 = y_{r+1} \cdots y_{|g|_Y}$ be geodesic paths representing $g_1$ and $g_2$, respectively, where $y_i \in Y$. Given $B \in \mathcal{P}$, we `modify' $(g_1,g_2)$ in a way that preserves the product $g_1g_2$, as follows; this construction is illustrated in \cref{f:k1k2}.

\begin{figure}[ht]

\begin{tikzpicture}[scale=0.65]
\filldraw [fill=green!10] (0.8,2) rectangle (10,3);
\draw [black,thick] plot [smooth] coordinates { (0,0) (1,1) (2,4.5) (3,4) (4,6) (5,7) (6,7.5) (7,7) (8,5) (9,4) (9.5,3) (10,0) (11,-2) (11.5,0) };
\filldraw (7,7) circle (2pt);
\filldraw (0,0) circle (2pt);
\filldraw (11.5,0) circle (2pt);
\draw [thick,->] (1.998,4.497) -- (2,4.5) node [left] {$\omega_1$};
\draw [thick,->] (8.997,4.006) -- (9,4) node [right] {$\omega_2$};
\draw [very thin] (0,0) -- (11.5,0);
\draw [<->] (7,0) -- (7,6.95) node [pos=0.6,right] {$h$};
\draw [<->] (3.8,0) -- (3.8,2) node [pos=0.6,left] {$b_iR-\frac{c}{2}\!$};
\draw [<->] (4,0) -- (4,3) node [pos=0.4,right] {$\!b_iR+\frac{c}{2}$};

\begin{scope}[xshift=13cm,yshift=4.25cm,yscale=1.25]
\fill [green!10] (-0.5,0) rectangle (10.5,3);

\draw [dashed] (-1,1.5) -- (4.5,1.5);
\draw [dash pattern=on 1pt off 3pt] (4.5,1.5) -- (6.5,1.5);
\draw [dashed] (6.5,1.5) -- (10,1.5);
\draw [|->|] (1.9,1.5) -- (1.9,2.1) node [midway,right] {$q_{i,1}$};
\draw [|->|] (8.8,1.5) -- (8.8,0.6) node [midway,right] {$q_{i,2}$};
\draw [->] (-1,-0.5) -- (-1,3.5) node [above left] {$\tau$} node [midway,left] {$b_iR$};

\draw (4.5,3) -- (-0.5,3) -- (-0.5,0) -- (4.5,0);
\draw (6.5,3) -- (10.5,3) -- (10.5,0) -- (6.5,0);
\draw [dash pattern=on 1pt off 2pt] (4.5,3) -- (6.5,3);
\draw [dash pattern=on 1pt off 2pt] (4.5,0) -- (6.5,0);
\draw [thick,dashed] (0.8,-0.6) -- (1,0);
\draw [thick,->] (1,0) -- (1.2,0.6) node [left] {$y_{j_{i,1}}$};
\filldraw [thick] (1.2,0.6) -- (1.7,2.1) circle (2pt) -- (2,3);
\draw [thick,dashed,->] (2,3) -- (2.2,3.6) node [left] {$y_{j_{i,1}+1}$};
\draw [thick,dashed] (2.2,3.6) -- (2.29,3.87);
\draw [thick,dashed] (7.85,3.6) -- (8,3);
\draw [thick,->] (8,3) -- (8.2,2.2) node [right] {$y_{j_{i,2}}$};
\filldraw [thick] (8.2,2.2) -- (8.6,0.6) circle (2pt) -- (8.75,0);
\draw [thick,dashed,->] (8.75,0) -- (8.9,-0.6) node [right] {$y_{j_{i,2}+1}$};
\draw [thick,dashed] (8.9,-0.6) -- (9,-1);

\draw[->,very thick,decorate,decoration=snake] (5.5,-0.25) -- (5.5,-1.75);
\end{scope}

\begin{scope}[xshift=13cm,yshift=-2cm,yscale=1.25]
\fill [green!10] (-0.5,0) rectangle (10.5,3);

\draw [dashed] (3.2,1.5) -- (4.5,1.5);
\draw [dash pattern=on 1pt off 3pt] (4.5,1.5) -- (6.5,1.5);
\draw [dashed] (6.5,1.5) -- (7.1,1.5);
\draw [red,thick,->] (7.1,1.5) -- (7.85,1.5) node [above] {$a_j$};
\draw [red,thick] (7.85,1.5) -- (8.6,1.5);
\draw [blue,thick,->] (7.1,1.5) -- (7.1,1.05) node [right] {$t^{q_{i,2}}$};
\draw [blue,thick] (7.1,1.05) -- (7.1,0.6);
\draw [blue,thick,->] (8.6,1.5) -- (8.6,1.05) node [right] {$t^{q_{i,2}}$};
\draw [blue,thick] (8.6,1.05) -- (8.6,0.6);
\draw [red,thick,->] (1.7,1.5) -- (2.45,1.5) node [below] {$a_j$};
\draw [red,thick] (2.45,1.5) -- (3.2,1.5);
\draw [blue,thick,->] (1.7,1.5) -- (1.7,1.8) node [right] {$t^{q_{i,1}}$};
\draw [blue,thick] (1.7,1.8) -- (1.7,2.1);
\draw [blue,thick,->] (3.2,1.5) -- (3.2,1.8) node [right] {$t^{q_{i,1}}$};
\draw [blue,thick] (3.2,1.8) -- (3.2,2.1);

\draw (4.5,3) -- (-0.5,3) -- (-0.5,0) -- (4.5,0);
\draw (6.5,3) -- (10.5,3) -- (10.5,0) -- (6.5,0);
\draw [dash pattern=on 1pt off 2pt] (4.5,3) -- (6.5,3);
\draw [dash pattern=on 1pt off 2pt] (4.5,0) -- (6.5,0);
\draw [thick,dashed] (0.8,-0.6) -- (1,0);
\draw [thick,->] (1,0) -- (1.2,0.6) node [left] {$y_{j_{i,1}}$};
\filldraw [thick] (1.2,0.6) -- (1.7,2.1) circle (2pt);
\filldraw [thick] (3.2,2.1) circle (2pt) -- (3.5,3);
\draw [thick,dashed,->] (3.5,3) -- (3.7,3.6) node [left] {$y_{j_{i,1}+1}$};
\draw [thick,dashed] (3.7,3.6) -- (3.79,3.87);
\draw [thick,dashed] (7.85,3.6) -- (8,3);
\draw [thick,->] (8,3) -- (8.2,2.2) node [right] {$y_{j_{i,2}}$};
\filldraw [thick] (8.2,2.2) -- (8.6,0.6) circle (2pt);
\filldraw [thick] (7.1,0.6) circle (2pt) -- (7.25,0);
\draw [thick,dashed,->] (7.25,0) -- (7.4,-0.6) node [right] {$y_{j_{i,2}+1}$};
\draw [thick,dashed] (7.4,-0.6) -- (7.5,-1);
\end{scope}
\end{tikzpicture}

\caption{Construction of the words $\psi_{B,1}$ and $\psi_{B,2}$ in the proof of \cref{l:manywords}.} \label{f:k1k2}
\end{figure}
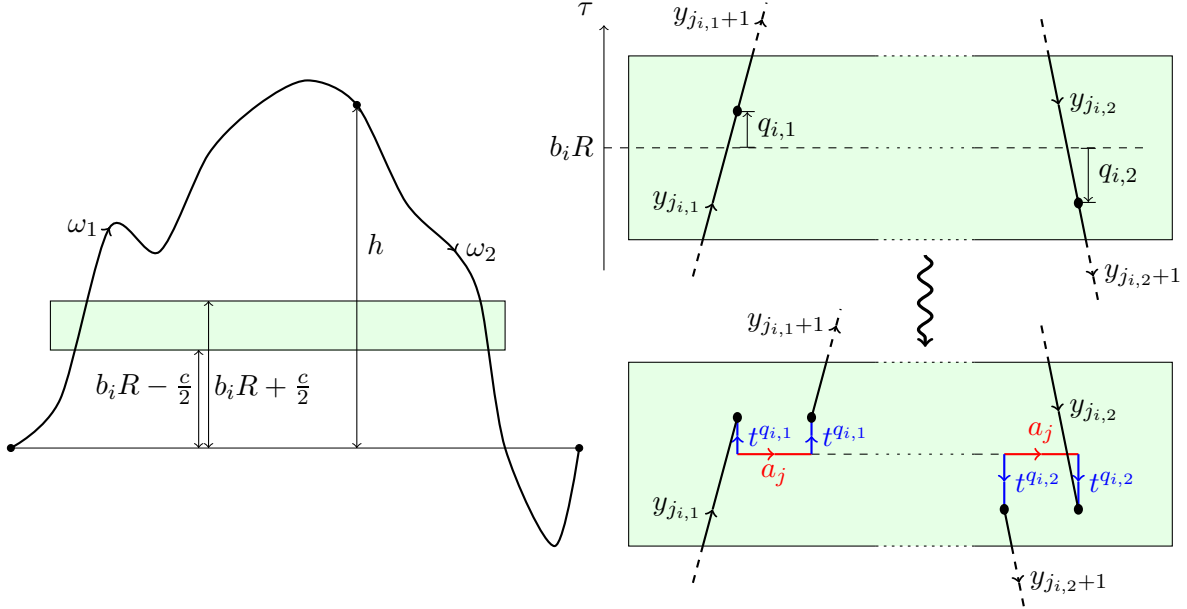

Let $B \in \mathcal{P}$, and write $B = \{ b_1,\ldots,b_k \}$, where $1 \leq b_1 < b_2 < \cdots < b_k \leq \lfloor \hat\delta n / R \rfloor$. Note that $|y|_X \leq c$ for all $y \in Y$, which implies that $|\tau(y)| \leq c$ for all $y \in Y$. Since $\tau(g_1) = -\tau(g_2) = h \geq \lfloor \hat\delta n \rfloor$, it follows that for each $s \in \{1,\ldots,\lfloor \hat\delta n \rfloor\}$, there exist integers $j_1,j_2$ with $0 \leq j_1 \leq r \leq j_2 \leq |g|_Y$ such that $|\tau(y_1 \cdots y_{j_1})-s| \leq \frac{c}{2}$ and $|\tau(y_{j_2+1} \cdots y_{|g|_Y})+s| \leq \frac{c}{2}$. In particular, for each $i \in \{ 1,\ldots,k \}$, there exist integers $j_{i,1},j_{i,2}$ such that
\begin{equation} \label{e:taubR}
|\tau(y_1 \cdots y_{j_{i,1}})-b_iR| \leq \frac{c}{2} \qquad \text{and} \qquad |\tau(y_{j_{i,2}+1} \cdots y_{|g|_Y})+b_iR| \leq \frac{c}{2}.
\end{equation}

Let $j$ be as in \cref{l:geps}, and define the words $\psi_{B,1}$ and $\psi_{B,2}$ as follows. For $\psi_{B,1}$, start with the word $\omega_1$, and for each $i \in \{ 1,\ldots,k \}$, insert a geodesic subword representing $t^{-q_{i,1}} a_j t^{q_{i,1}}$ between $y_{j_{i,1}}$ and $y_{j_{i,1}+1}$, where $q_{i,1} = \tau(y_1 \cdots y_{j_{i,1}})-b_iR$. For $\psi_{B,2}$, start with the word $\omega_2$, and for each $i \in \{ 1,\ldots,k \}$, insert a geodesic subword representing $t^{-q_{i,2}} a_j^{-1} t^{q_{i,2}}$ between $y_{j_{i,2}}$ and $y_{j_{i,2}+1}$, where $q_{i,2} = -\tau(y_{j_{i,2}+1} \cdots y_{|g|_Y})-b_iR$.

Let $l_{B,1}$ and $l_{B,2}$ be the elements represented by words $\psi_{B,1}$ and $\psi_{B,2}$, respectively. Then commutativity of $A$ and the choice of the $q_{i,1}$ and the $q_{i,2}$ implies that $l_{B,1}l_{B,2} = g_1g_2 = g$. Furthermore, it is clear that $\tau(l_{B,1}) = \tau(g_1) = h$ and $\tau(l_{B,2}) = \tau(g_2) = -h$. Finally, by \eqref{e:taubR} we have $|q_{i,1}|,|q_{i,2}| \leq \frac{c}{2}$ for each $i$, so every fragment inserted into $\omega_1$ (or $\omega_2$) to form $\psi_{B,1}$ (or $\psi_{B,2}$) has word-length at most $c+1$ with respect to $X$, and so at most $c(c+1)$ with respect to $Y$. It follows that
\begin{align*}
|l_{B,1}|_Y &\leq |g_1|_Y + c(c+1)k \leq p+c(c+1)k \\ \text{and} \qquad |l_{B,2}|_Y &\leq |g_2|_Y + c(c+1)k \leq n-p+c(c+1)k.
\end{align*}
Thus indeed $(l_{B,1},l_{B,2}) \in \mu^{-1}(g)$, as required.

Finally, to show that the map $\mathcal{P} \to \mu^{-1}(g), B \mapsto (l_{B,1},l_{B,2})$ is injective, we use \cref{l:geps}. Indeed, if $B,C \in \mathcal{P}$ are distinct, then
\[
\varphi(l_{B,1}l_{C,1}^{-1}) = \sum_{i=1}^k (\mathbbm{1}_B(i)-\mathbbm{1}_C(i)) T^{iR} \bfe_j = \varphi(g_{\mathbbm{1}_B}g_{\mathbbm{1}_C}^{-1}),
\]
where $\varphi$ is as in \cref{l:phi} and $\mathbbm{1}_B,\mathbbm{1}_C: \{1,\ldots,k\} \to \{0,1\}$ are the indicator functions for $B$ and $C$, defined by
\[
\mathbbm{1}_B(i) = \begin{cases} 1 & \text{if } i \in B, \\ 0 & \text{otherwise.} \end{cases}
\]
In particular, by \cref{l:phi,l:geps} we have $l_{B,1}l_{C,1}^{-1} \neq 1$, and so $(l_{B,1},l_{B,2}) \neq (l_{C,1},l_{C,2})$, completing the proof in the case $h > 0$.

If instead $h < 0$, then a very similar argument, obtained essentially by `turning \cref{f:k1k2} upside down', works.
\end{proof}

\cref{t:A} can now be deduced from the following Theorem, which at first glance seems to be marginally weaker than \cref{t:A}.

\begin{thm} \label{t:Aalmost}
Let $f: \N \to [0,\infty)$ be a function such that $f(n) \to 0$ as $n \to \infty$. Then
\[
\log |A \cap B_Y(n)| - \log |B_Y(n)| \leq -nf(n)
\]
for all sufficiently large $n$.
\end{thm}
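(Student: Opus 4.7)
The statement is equivalent to $A$ being exponentially negligible in $G$ (one direction is immediate since $f(n)<\eta$ eventually for any $f\to 0$ and any fixed $\eta>0$; the reverse is a standard diagonal construction setting $f(n_j)$ slightly above $-\log\gamma_{n_j}^Y(A)/n_j$ along a subsequence witnessing sub-exponential decay). I therefore aim directly at the stronger bound $|A\cap B_Y(n)|/|B_Y(n)|\leq e^{-\eta n}$ for some $\eta>0$ and all $n$ large.

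By \cref{l:Zgen} it suffices to control $|\mathcal{Z}\cap B_Y(n)|/|B_Y(n)|$. Partition this set according to whether $|g|_Y\leq(1-\varepsilon)n$ or $|g|_Y>(1-\varepsilon)n$, for a small $\varepsilon>0$ to be fixed. The first piece lies in $B_Y((1-\varepsilon)n)$, whose ratio to $|B_Y(n)|$ is at most $\beta^{-\varepsilon n+o(n)}$, where $\beta>1$ is the growth rate of $G$ guaranteed by \cref{p:Ggrowth} -- Fekete's lemma, applied to the submultiplicative sequence $|B_Y(n)|$, gives $|B_Y(m)|\geq\beta^m$ and $|B_Y(m)|=\beta^{m(1+o(1))}$. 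For each long element, \cref{l:highwords} produces a geodesic $\omega$ with $\mathfrak{h}(\omega)\geq\delta|g|_Y\geq\delta(1-\varepsilon)n$. Splitting $\omega$ at the first index realising its extremal height $h$ shows $g\in\mathcal{A}_{p,h}(n)$ for some $0\leq p\leq n$ and some integer $h$ with $|h|\geq\hat\delta n$, where $\hat\delta:=\delta(1-\varepsilon)/2$. The long piece is thus covered by a union of $O(n^2)$ sets $\mathcal{A}_{p,h}(n)$.

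Next I apply \cref{l:manywords} with $k=\lfloor\lambda\hat\delta n/R\rfloor$ and $q=c(c+1)k$, for a small $\lambda\in(0,1/2)$ to be fixed. Summing its preimage lower bound over $g\in\mathcal{A}_{p,h}(n)$ and using the trivial bound on the domain of $\mu_{n,p,q,h}$ gives
\[ |\mathcal{A}_{p,h}(n)|\leq\frac{|B_Y(p+q)|\cdot|B_Y(n-p+q)|}{\binom{\lfloor\hat\delta n/R\rfloor}{k}}. \]
From $|B_Y(m)|=\beta^{m(1+o(1))}$ (uniform for $m=\Theta(n)$) the numerator is at most $\beta^{n+2q+o(n)}$, while Stirling gives $\binom{\lfloor\hat\delta n/R\rfloor}{k}\geq 2^{H(\lambda)\hat\delta n/R}/(n+1)$ with $H$ the binary entropy. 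Combining, $\log(|\mathcal{Z}\cap B_Y(n)|/|B_Y(n)|)$ is bounded above by
\[ O(\log n)+\frac{\hat\delta n}{R}\bigl(2c(c+1)\lambda\log\beta-H(\lambda)\log 2\bigr)+o(n). \]
Since $H(\lambda)\log 2\sim\lambda\log(1/\lambda)$ as $\lambda\to 0^+$, choosing any $\lambda<\beta^{-2c(c+1)}$ makes the bracket strictly negative, yielding exponential decay.

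The main obstacle is the parameter trade-off above: the crude bound on the domain of $\mu_{n,p,q,h}$ loses a factor of order $\beta^{2q}$, which the binomial-coefficient gain must compensate independently of how large the growth rate $\beta$ is. The decisive feature is that the loss is linear in $\lambda$ while the entropy gain $H(\lambda)\log 2$ is super-linear as $\lambda\to 0^+$, so shrinking $\lambda$ always wins. Once this is arranged, any $\varepsilon\in(0,1)$ completes the proof, with the short-element contribution $\beta^{-\varepsilon n+o(n)}$ comfortably absorbed into the resulting exponential decay.
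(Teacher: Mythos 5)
Your proof is correct, and it uses the same scaffolding as the paper (Lemmas \ref{l:Zgen}, \ref{l:highwords} and \ref{l:manywords}, the decomposition of long elements of $\mathcal{Z}$ into the sets $\mathcal{A}_{p,h}(n)$, and the counting argument via $\mu_{n,p,q,h}$). The one genuine departure is the choice of $k$: the paper takes $k = nF(n)$ with $F(n)\to 0$ (a vanishing fraction of the number $\lfloor\hat\delta n/R\rfloor$ of available insertion slots), which forces the binomial gain $-nF(n)\log F(n)$ to diverge only relative to $nF(n)$, and this in turn requires the careful construction of $F$ dominating $\varepsilon_{p(n)}$, $\varepsilon_{n-p(n)}$, $\log n/n$, and $f(n)$, followed by the contradiction argument to pass from \cref{t:Aalmost} to \cref{t:A}. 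You instead take $k$ to be a \emph{fixed} small fraction $\lambda$ of the slots. This makes the binomial gain $\Theta(n)$ and the submultiplicativity loss $\Theta(n)$ as well, so everything is decided by comparing the two linear coefficients: since your loss is linear in $\lambda$ while the entropy gain behaves like $\lambda\log(1/\lambda)$ as $\lambda\to 0^+$, taking $\lambda<\mu^{-2c(c+1)}$ works (where $\mu$ is the growth rate of $G$, which you denote $\beta$; mind the notational clash with the paper's $\beta\in(1,\alpha^{1/2m})$ used to define $\mathcal Z$). This is cleaner: it delivers \cref{t:A} directly with an explicit positive decay rate, bypasses the intermediate statement \cref{t:Aalmost} (which you correctly note is equivalent to \cref{t:A} by the bounding/diagonal observations), and does not require constructing the auxiliary function $F$ at all because the $o(n)$ error from $\log|B_Y(m)|=m\log\mu+o(m)$ is simply absorbed by the $\Theta(n)$ margin. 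The paper's route buys a statement phrased for arbitrary $f\to 0$, but as you observe that buys nothing extra once the constant-rate bound is in hand.
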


It would seem that \cref{t:A} would only follow from \cref{t:Aalmost} if we were allowed to take $f$ in \cref{t:Aalmost} to be a strictly positive constant function. However, using a general argument on sequences, we can actually deduce \cref{t:A} from \cref{t:Aalmost}.

\begin{proof}[Proof of \cref{t:A}]
Suppose for contradiction that $A(m,T)$ is not exponentially negligible in $G(m,T)$ with respect to a finite generating set $Y$. This is equivalent to saying that
\[ c_i = \frac{\log |A(m,T) \cap B_Y(n_i)|-\log |B_Y(n_i)|}{n_i} \to 0 \qquad \text{as } i \to \infty  \]
for some (without loss of generality, strictly increasing) sequence $(n_i)_{i=1}^\infty$ in $\N$. Note that, as $A(m,T) \neq G(m,T)$, we have $Y \nsubseteq A(m,T)$ and so $c_i < 0$ for all $i$. Define a function $f: \N \to [0,\infty)$ by
\[ f(n) = \begin{cases} -2c_i & \text{if } n = n_i \text{ for some } i, \\ 0 & \text{otherwise.} \end{cases} \]
Then $f(n) \to 0$ as $n \to \infty$, and so it would follow from \cref{t:Aalmost} that $-n_ic_i \geq -2n_ic_i$ for $i$ large enough, which gives a contradiction. Therefore $A(m,T)$ is exponentially negligible in $G(m,T)$ with respect to $Y$, as required.
\end{proof}

\begin{proof}[Proof of \cref{t:Aalmost}]
The proof uses \cref{l:Zgen,l:highwords} to find a subset of $A$ of the form $\mathcal{A}(n)$ (as defined before \cref{l:manywords}) that is `large' in an appropriate sense, and then uses \cref{l:manywords} to give bounds.

We first show that a generic element of $A$ is `not too short' in terms of word length. Let $\mu > 1$ be the growth rate of $G$ with respect to $Y$: that is, $\mu = \limsup_{n \to \infty} \sqrt[n]{|B_Y(n)|}$, and recall that by \cref{p:Ggrowth} there exists $\alpha > 1$ such that $|A \cap B_Y(n)| \geq \alpha^n$ for all sufficiently large $n$. Fix $\zeta \in \left(0,\frac{\log\alpha}{\log(\mu+1)}\right)$. For $n$ sufficiently large we have $|B_Y(\lfloor \zeta n \rfloor)| \leq (\mu+1)^{\zeta n}$ by the definition of $\mu$, thus
\[
\frac{|A \cap B_Y(\lfloor \zeta n \rfloor)|}{|A \cap B_Y(n)|} \leq \frac{|B_Y(\lfloor \zeta n \rfloor)|}{|A \cap B_Y(n)|} \leq \frac{(\mu+1)^{\zeta n}}{\alpha^n} = \left( \frac{(\mu+1)^\zeta}{\alpha} \right)^n
\]
for sufficiently large $n$, and so $\frac{|A \cap B_Y(\lfloor \zeta n \rfloor)|}{|A \cap B_Y(n)|} \to 0$ as $n \to \infty$. It follows from this and \cref{l:Zgen} that
\begin{equation} \label{e:ZB}
\frac{|\mathcal{Z} \cap (B_Y(n) \setminus B_Y(\lfloor \zeta n \rfloor))|}{|A \cap B_Y(n)|} \to 1 \qquad \text{as } n \to \infty.
\end{equation}

We now construct a subset of $|A \cap B_Y(n)|$ of the form $\mathcal{A}(n)$ that contains `enough' elements. For any $g \in G$, choose be a geodesic word $\omega_g$ representing $g$. If $g \in \mathcal{Z}$ and $\zeta n < |g|_Y \leq n$, then it is clear that $\mathfrak{h}(\omega_g) \leq \frac{cn}{2}$; on the other hand, by \cref{l:highwords} we have $\mathfrak{h}(\omega_g) \geq \delta\zeta n$ when $n$ is large enough, for some (universal) constant $\delta > 0$. This gives at most $\frac{cn}{2}$ possible values of $\mathfrak{h}(\omega_g)$, and so by pidgeonhole principle there exists some $h_0 = h_0(n) \in \N$ with $\delta\zeta n \leq h_0 \leq \frac{cn}{2}$ such that
\[
|\{ g \in \mathcal{Z} \cap B_Y(n) \mid \mathfrak{h}(\omega_g) = h_0 \}| \geq \frac{2}{cn} |\mathcal{Z} \cap (B_Y(n) \setminus B_Y(\lfloor \zeta n \rfloor))|.
\]
By the definition of $\mathfrak{h}(\omega_g)$, it follows that for some $h = h(n) \in \{ \pm h_0 \}$, at least a half of the elements in $\{ g \in \mathcal{Z} \cap B_Y(n) \mid \mathfrak{h}(\omega_g) = h_0 \}$ can be written as $g = g_1g_2$, where $g_1 \in t^hA$ and $|g|_Y = |g_1|_Y+|g_2|_Y$. By using the pidgeonhole principle on the set $\{1,\ldots,n-1\}$ of possible values for $p = |g_1|_Y$, we see that
\[
|\mathcal{A}_{p,h}(n)| \geq \frac{1}{cn^2} |\mathcal{Z} \cap (B_Y(n) \setminus B_Y(\lfloor \zeta n \rfloor))|
\]
for some $p = p(n)$, where $\mathcal{A}(n) = \mathcal{A}_{p,h}(n)$ is as defined before \cref{l:manywords}. Combining this with \eqref{e:ZB} yields
\begin{equation} \label{e:Aphn}
|\mathcal{A}(n)| \geq \frac{1}{2cn^2} |A \cap B_Y(n)|
\end{equation}
for all sufficiently large $n$.

By Fekete's Lemma, it follows that if we write
\[ \log |B_Y(n)| = (\log \mu + \varepsilon_n)n, \]
then $\varepsilon_n \geq 0$ for all $n$ and $\varepsilon_n \to 0$ as $n \to \infty$. Note that we also have $\delta\zeta n \leq h_0 \leq cp$, and so $p(n) \geq \delta\zeta n/c$; similarly, $n-p(n) \geq \delta\zeta n/c$. Therefore, $p(n) \to \infty$ and $n-p(n) \to \infty$ as $n \to \infty$, and so there exists a function $F: \N \to [0,\infty)$ such that
\[
F(n) \geq f(n), \frac{\log n}{n}, \varepsilon_{p(n)}, \varepsilon_{n-p(n)} \quad \text{for all } n, \qquad \text{and} \qquad F(n) \to 0 \quad \text{as } n \to \infty.
\]
By replacing $F(n)$ with $\lceil nF(n) \rceil/n$ if necessary, we may furthermore assume that $nF(n) \in \Z$ for all $n$.

We now apply \cref{l:manywords} with $n$ sufficiently large (so that $F(n) \leq \delta\zeta/R$), with $p = p(n)$ and $h = h(n)$ as above, with $\hat\delta = \delta\zeta$ and with $k = nF(n)$. It then follows that $|\mu^{-1}(g)| \geq \genfrac(){0pt}{1}{\lfloor \delta_0 n \rfloor}{k}$ for each $g \in \mathcal{A}(n)$, where $\mu = \mu_{n,p,c(c+1)nF(n),h}$ and $\delta_0 = \delta\zeta/R$, and therefore
\begin{equation} \label{e:Anleq}
\begin{aligned}
|\mathcal{A}(n)| &\leq \frac{|t^hA \cap B_Y(p+c(c+1)nF(n)| \times |t^{-h}A \cap B_Y(n-p+c(c+1)nF(n)|}{\genfrac(){0pt}{1}{\lfloor \delta_0 n \rfloor}{nF(n)}} \\ &\leq \frac{|B_Y(p+c(c+1)nF(n)| \times |B_Y(n-p+c(c+1)nF(n)|}{\genfrac(){0pt}{1}{\lfloor \delta_0 n \rfloor}{nF(n)}} \\ &\leq \frac{|B_Y(p)| \times |B_Y(n-p)| \times |B_Y(c(c+1)nF(n)|^2}{\genfrac(){0pt}{1}{\lfloor \delta_0 n \rfloor}{nF(n)}},
\end{aligned}
\end{equation}
where the last inequality comes from the submultiplicativity of the function $n \mapsto |B_Y(n)|$.

The Theorem can now be deduced from the results above via a few calculations. Considering each term in \eqref{e:Anleq} separately and taking logarithms, we get the following bounds:
\begin{align*}
\log |B_Y(p)| &\leq p\log\mu + p\varepsilon_p, \\
\log |B_Y(n-p)| &\leq (n-p)\log\mu + (n-p)\varepsilon_{n-p}, \\
\log |B_Y(c(c+1)nF(n))| &\leq c(c+1)nF(n)\log|Y|, \\
\text{and} \qquad \log \genfrac(){0pt}{1}{\lfloor \delta_0 n \rfloor}{nF(n)} &= \sum_{i=1}^{nF(n)} \log \left( \frac{\lfloor\delta_0n\rfloor-i+1}{i} \right) \geq nF(n) \log \left( \frac{\lfloor\delta_0n\rfloor-nF(n)}{nF(n)} \right) \\ &\geq nF(n)\log(\delta_0/2-F(n)) - nF(n)\log F(n),
\end{align*}
where the last inequality comes from the bound $\lfloor\delta_0n\rfloor \geq \delta_0n/2$, which is true for $n$ large. Combining these bounds and using \eqref{e:Aphn} and \eqref{e:Anleq} gives
\begin{align*}
-\log 2c &-2\log n + \log |A \cap B_Y(n)| \leq \log |\mathcal{A}(n)| \leq n\log\mu + p\varepsilon_p + (n-p)\varepsilon_{n-p} \\ &+ 2c(c+1)nF(n)\log |Y| - nF(n)\log(\delta_0/2-F(n)) + nF(n)\log F(n),
\end{align*}
which can be rearranged to yield
\begin{align*}
\log |A \cap B_Y(n)| - n\log\mu \leq \log 2c &+2\log n + p\varepsilon_p + (n-p)\varepsilon_{n-p} + 2c(c+1)nF(n)\log |Y| \\ &- nF(n)\log(\delta_0/2-F(n)) + nF(n)\log F(n).
\end{align*}
It follows from the definition of $F(n)$ that, for $n$ sufficiently large, all the terms on the right hand side except for the last one can be bounded by a constant multiple of $nF(n)$. Therefore, we have
\[
\limsup_{n \to \infty} \left( \frac{\log |A \cap B_Y(n)| - n\log\mu}{nF(n)} - \log F(n) \right) < \infty.
\]
But since $\log F(n) \to -\infty$ as $n \to \infty$, and since $\log |B_Y(n)| \geq n \log \mu$ for all $n$, we see that
\[
\frac{\log |A \cap B_Y(n)| - \log |B_Y(n)|}{nF(n)} \leq \frac{\log |A \cap B_Y(n)| - n\log\mu}{nF(n)} \leq -1
\]
for all sufficiently large $n$. As $F(n) \geq f(n)$, this implies the result.
\end{proof}

\section{Degree of nilpotence} \label{s:dn}

In this section we prove \cref{t:dn}. We let $G = G(m,T)$ and $A = A(m,T)$, and let $Y$ be a finite generating set for $G$. We assume, as in the previous section, that $G$ has exponential growth.

To show \cref{t:dn}, we use the following general Lemma.
\begin{lem} \label{l:gennilp}
Suppose, given a group $G$ generated by a finite set $Y$, that there exist a subset $\mathcal{N} \subseteq G$ and a function $f: \Z_{\geq 0} \to [0,1]$ satisfying (for all integers $n \geq 0$)
\begin{equation} \label{e:gennilp1}
\frac{|\mathcal{N} \cap B_Y(n)|}{|B_Y(n)|} \leq f(n)
\end{equation}
and
\begin{equation} \label{e:gennilp2}
\frac{|\{ h \in B_Y(n) \mid [g,h] \in \mathcal{N} \}|}{|B_Y(n)|} \leq f(n)
\end{equation}
for all $g \in G \setminus \mathcal{N}$. Then
\[
\frac{|\{ (x_0,\ldots,x_r) \in B_Y(n)^{r+1} \mid [x_0,\ldots,x_r] \in \mathcal{N} \}|}{|B_Y(n)|^{r+1}} \leq (r+1)f(n)
\]
for all integers $r,n \geq 0$.
\end{lem}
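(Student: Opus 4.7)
The plan is to induct on $r$, using the recursive definition $[x_0,\ldots,x_r] = [[x_0,\ldots,x_{r-1}],x_r]$. The base case $r = 0$ asks for the bound $|\mathcal{N} \cap B_Y(n)|/|B_Y(n)| \leq f(n)$, which is precisely hypothesis \eqref{e:gennilp1}.

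For the inductive step, assume the statement for $r-1$ and fix $n$. Given a tuple $(x_0,\ldots,x_r) \in B_Y(n)^{r+1}$, let $y = [x_0,\ldots,x_{r-1}]$, so that $[x_0,\ldots,x_r] = [y,x_r]$. I would partition the set of tuples $(x_0,\ldots,x_r)$ with $[x_0,\ldots,x_r] \in \mathcal{N}$ into two classes according to whether $y \in \mathcal{N}$ or $y \notin \mathcal{N}$, and bound each class separately.

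In the first class, the inductive hypothesis applied to the tuple $(x_0,\ldots,x_{r-1})$ gives at most $rf(n)|B_Y(n)|^r$ choices for $(x_0,\ldots,x_{r-1})$, and then $x_r$ ranges freely in $B_Y(n)$, contributing at most $rf(n)|B_Y(n)|^{r+1}$ tuples. In the second class, for each of the at most $|B_Y(n)|^r$ choices of $(x_0,\ldots,x_{r-1})$ the resulting $y$ lies in $G\setminus\mathcal{N}$, and then hypothesis \eqref{e:gennilp2} bounds the number of admissible $x_r$ by $f(n)|B_Y(n)|$; this class thus contributes at most $f(n)|B_Y(n)|^{r+1}$. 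Adding the two contributions yields the desired bound $(r+1)f(n)|B_Y(n)|^{r+1}$.

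There is no real obstacle here: the argument is a clean double-counting induction, and the key structural observation --- that the last commutator bracket lets us condition on the value of the inner commutator and apply \eqref{e:gennilp2} uniformly when that value is outside $\mathcal{N}$ --- is exactly what the hypotheses are designed to exploit. The only minor care point is keeping straight that \eqref{e:gennilp2} is a uniform bound in $g$ (so we can use it for each $y \notin \mathcal{N}$ that arises without needing any further structure on the set of such $y$'s).
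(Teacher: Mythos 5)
Your proof is correct and matches the paper's argument almost verbatim: both proceed by induction on $r$, splitting the tuples according to whether the inner commutator $[x_0,\ldots,x_{r-1}]$ lies in $\mathcal{N}$, bounding the first class via the inductive hypothesis and the second via the uniform bound \eqref{e:gennilp2}.
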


\begin{proof}
Induction on $r$. The base case $r = 0$ follows from \eqref{e:gennilp1}.

For $r \geq 1$, write $\bar{x}$ for $[x_0,\ldots,x_{r-1}]$. Then we have
\begin{align*}
\sum_{\substack{x_0,\ldots,x_{r-1} \in B_Y(n) \\ \bar{x} \in \mathcal{N}}} &\frac{|\{ x_r \in B_Y(n) \mid [\bar{x},x_r] \in \mathcal{N} \}|}{|B_Y(n)|^{r+1}} \\ &\leq |\{ (x_0,\ldots,x_{r-1}) \in B_Y(n)^r \mid \bar{x} \in \mathcal{N} \}| \times \frac{|B_Y(n)|}{|B_Y(n)|^{r+1}} \\ &\leq |B_Y(n)|^r rf(n) \times |B_Y(n)|^{-r} = rf(n)
\end{align*}
by the induction hypothesis, and
\begin{align*}
\sum_{\substack{x_0,\ldots,x_{r-1} \in B_Y(n) \\ \bar{x} \notin \mathcal{N}}} &\frac{|\{ x_r \in B_Y(n) \mid [\bar{x},x_r] \in \mathcal{N} \}|}{|B_Y(n)|^{r+1}} \leq \sum_{\substack{x_0,\ldots,x_{r-1} \in B_Y(n) \\ \bar{x} \notin \mathcal{N}}} \frac{|B_Y(n)|f(n)}{|B_Y(n)|^{r+1}} \\ &\leq |B_Y(n)|^r \times \frac{|B_Y(n)|f(n)}{|B_Y(n)|^{r+1}} = f(n)
\end{align*}
by \eqref{e:gennilp2}. This gives 
\begin{align*}
&\frac{|\{ (x_0,\ldots,x_r) \in B_Y(n)^{r+1} \mid [x_0,\ldots,x_r] \in \mathcal{N} \}|}{|B_Y(n)|^{r+1}} \\ &\qquad = \sum_{x_0,\ldots,x_{r-1} \in B_Y(n)} \frac{|\{ x_r \in B_Y(n) \mid [\bar{x},x_r] \in \mathcal{N} \}|}{|B_Y(n)|^{r+1}} \leq rf(n)+f(n) = (r+1)f(n),
\end{align*}
as required.
\end{proof}

For a subgroup $H \leq G$, let $\{1\} = Z_0(H) \unlhd Z_1(H) \unlhd Z_2(H) \unlhd \cdots$ be the upper central series for $H$. We will use \cref{l:gennilp} with $\mathcal{N} = \bigcup_{i=0}^\infty Z_i(H)$ for a particular subgroup $H$. To show \eqref{e:gennilp1}, the following Lemma will be enough.

\begin{lem} \label{l:ZHinA}
Let $H = \langle a_1,\ldots,a_m,t^N \rangle \leq G$, where $N \in \N$. Then there exists a polynomial $p$ such that
\[
\left|\mathcal{N} \cap B_Y(n)\right| \leq p(n)
\]
for all $n \in \N$, where $\mathcal{N} = \bigcup_{i=0}^\infty Z_i(H)$.
\end{lem}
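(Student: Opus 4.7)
The plan is to identify $\mathcal{N}$ explicitly as a free abelian subgroup $\bfa^\Lambda$, where $\Lambda$ is a lattice in a $T$-invariant subspace $V$ of $\Q^m$ on which $T$ has only polynomial (rather than exponential) growth, and then to derive a polynomial bound on $|\mathcal{N} \cap B_Y(n)|$ by adapting the estimate \eqref{e:phig} to this setting via projection onto $V$.

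First I will show by induction on $i$ that $Z_i(H) \subseteq A$ and that $Z_i(H) = \varphi^{-1}(W_i)$, where $\varphi : A \to \Q^m$ is the map of \cref{l:phi} and $W_i = \ker((T^N - I)^i) \cap \varphi(A)$. The key commutator computation is that for $g = t^{Nk}b \in H$ (with $b \in A$) and $\bfa^\bfw \in A \cap H$ one has $[g, \bfa^\bfw] = \bfa^{(I - T^{-Nk})\bfw}$; applying this for all $\bfw \in \Z^m$ and combining with the inductive hypothesis leads to the matrix identity $(T^N - I)^i Q_k(T^N) = 0$, where $Q_k(X) = X^{k-1} + X^{k-2} + \cdots + 1$ (with an analogous factorisation for $k < 0$). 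Evaluating on the generalised eigenspace of $T$ for an eigenvalue $\lambda$ with $|\lambda| > 1$ (supplied by \cref{p:Ggrowth}), both $(T^N - I)^i$ and $Q_k(T^N)$ are invertible when $k \neq 0$, forcing $k = 0$; hence $g \in A$ and $\varphi(g) \in W_i$.

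Next I describe $\mathcal{N}$ concretely. Let $V \subseteq \Q^m$ be the generalised $1$-eigenspace of $T^N$, so $V$ is $T$-invariant and of dimension $d < m$ (as the $\lambda$-eigenspace does not contribute). All eigenvalues of $T|_V$ are $N$-th roots of unity, so $|\det(T|_V)| = 1$, and $T$ restricts to a bijection of the lattice $\Lambda := V \cap \Z^m$. Since $\varphi(A) = \bigcup_{k \geq 0} T^{-k}\Z^m$ and $V$ is $T$-invariant, one has $W_\infty = V \cap \varphi(A) = \bigcup_k T^{-k}\Lambda = \Lambda$. Therefore $\mathcal{N} = \bfa^\Lambda$, a free abelian group of rank $d < m$.

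It then remains to show that $\|\varphi(g)\|_\infty$ is polynomially bounded in $|g|_Y$ for $g \in \mathcal{N}$. Decomposing $\Q^m = V \oplus U$ along $T$-invariant summands (where $U$ is the sum of generalised eigenspaces of $T^N$ for eigenvalues other than $1$), let $\pi_V$ be the projection onto $V$; since both summands are $T$-invariant, $\pi_V$ commutes with $T$. The identity $\varphi(g) = \sum_{j=1}^n T^{\tau_{j-1}} \varphi(y_j t^{-\tau(y_j)})$ from the proof of \eqref{e:phig} then gives, for $g \in \mathcal{N}$, $\varphi(g) = \pi_V\varphi(g) = \sum_{j=1}^n T^{\tau_{j-1}} \pi_V\varphi(y_j t^{-\tau(y_j)})$. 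Because all eigenvalues of $T|_V$ lie on the unit circle, the operator norm $\|T^k|_V\|_{op}$ grows only polynomially, say $\|T^k|_V\|_{op} \leq C_1(1+|k|)^{s-1}$ for some $s$; together with $|\tau_{j-1}| \leq \mathfrak{h}(\omega) \leq c|g|_Y$ this yields $\|\varphi(g)\|_\infty \leq C_2|g|_Y^s$. Hence $|\mathcal{N} \cap B_Y(n)|$ is bounded by the number of points of $\Lambda$ in an $\ell_\infty$-box of side $C_2 n^s$, which is $O(n^{sd})$, giving the required polynomial $p$.

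The main obstacle is the last step: the crude estimate \eqref{e:phig} carries the \emph{exponential} factor $L^{\mathfrak{h}(\omega)}$, because $T$ has directions of exponential expansion. The projection trick works precisely because $\mathcal{N}$ is trapped inside $V$, where $T$ acts only polynomially; the $T$-invariant splitting $\Q^m = V \oplus U$ and the resulting commutation of $\pi_V$ with $T$ allow us to replace $L$ by a polynomial quantity in the analogue of \eqref{e:phig}.
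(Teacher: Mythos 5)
Your proof is correct, and at the level of ideas it is essentially the same as the paper's: both arguments show $\mathcal{N}\subseteq A$ via a commutator computation that, using an eigenvalue of modulus greater than $1$, forces the unipotent identity $(T^N-I)^i=0$ unless the $t$-exponent vanishes; both then characterise $Z_i(H)$ in terms of $\ker(T^N-I)^i$, project onto the generalised $1$-eigenspace of $T^N$, use the polynomial growth of the unipotent part to bound $\|\varphi(g)\|$ polynomially in $|g|_Y$ via the height bound $\mathfrak{h}\leq c|g|_Y$, and finish by counting lattice points in a polynomially-sized box. The one genuine (minor) streamlining in your version is the explicit identification $\mathcal{N}=\bfa^\Lambda$ with $\Lambda=V\cap\Z^m$, using that $T$ acts as an automorphism of $\Lambda$ because $\det(T|_V)=\pm 1$; the paper instead keeps elements of $\mathcal{N}$ written as $t^{-r}\bfa^\bfu t^{r}$ and invokes a conjugation by a power of $t$ at the end to reduce to the case $\bfa^\bfu$. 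Your observation makes that last step unnecessary. Two small caveats worth noting if this were to be written up in full: the base case and the reverse inclusion $\varphi^{-1}(W_i)\subseteq Z_i(H)$ in your claimed equality $Z_i(H)=\varphi^{-1}(W_i)$ should be verified (for the stated upper bound only the forward inclusion is needed, so nothing is lost), and once you have shown $k=0$ you still need to apply the commutator identity with $h=t^N$ (not just $h=\bfa^\bfw$) to conclude $(T^N-I)^i\varphi(g)=\bfzero$, as you implicitly do.
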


\begin{proof}
Recall that every element of $G$ can be expressed as $t^{-r}\bfa^\bfu t^s$ for some $r,s \geq 0$ and $\bfu \in \Z^m$. We first show that $\mathcal{N} \subseteq A$. Suppose for contradiction that there exists an element $g = t^{-r}\bfa^\bfu t^s \in Z_i(H)$ with $r \neq s$; after replacing $g$ with $g^{-1}$ if necessary, we may assume that $r > s$. Then, for all $\bfv \in \Z^m$ we have $\bfa^\bfv \in H$, and so
\[
\bfa^\bfv Z_{i-1}(H) = g \bfa^\bfv g^{-1} Z_{i-1}(H) = t^{-r} \bfa^{\bfu + T^s\bfv - \bfu} t^{-r} Z_{i-1}(H) = t^{s-r} \bfa^\bfv t^{r-s} Z_{i-1}(H).
\]
Moreover, since $g \in H$ we have $N \mid \tau(g) = s-r$, and so $t^{s-r} \in H$. This implies that $t^{s-r} \in Z_i(H)$, and so any $(i+1)$-fold simple commtator $[g_1,t^{s-r},g_2,\ldots,g_i]$ vanishes for $g_1,\ldots,g_i \in H$.

In particular, since $[ \bfa^\bfu,t^{s-r}] = \bfa^{(T^{r-s}-I)\bfu}$, we have
\begin{equation} \label{e:comm}
1 = [ \bfa^\bfu, \overbrace{t^{s-r},\ldots,t^{s-r}}^i ] = \bfa^{(T^{r-s}-I)^i\bfu}
\end{equation}
for all $\bfu \in \Z^m$, and so $(T^{r-s}-I)^i = 0$. Thus all eigenvalues of $T^{r-s}$ are equal to $1$, and so all eigenvalues of $T$ are roots of unity. As we assumed that $G$ has exponential growth, this contradicts \cref{p:Ggrowth}. Thus $t^{-r}\bfa^\bfu t^s \notin Z_i(H)$ whenever $r \neq s$, and so $Z_i(H) \leq A$, as required.

Therefore, $\mathcal{N} \subseteq A$. If $\bfa^\bfu \in Z_i(H)$ then \eqref{e:comm} holds (with $r = N$ and $s = 0$, say) and so $\bfu \in U \cap \Z^m$, where $U = \bigcup_{i=1}^\infty \ker(T^N-I)^i \leq \C^m$. Therefore,
\begin{equation} \label{e:whatisz}
Z_i(H) = \{ t^{-r}\bfa^{\bfu}t^r \mid \bfu \in \Z^m, r \geq 0, (T^N-I)^i\bfu = \bfzero \}.
\end{equation}
The strategy of the proof is now to show that if $\| \cdot \|$ is a norm on $U$,  then $\|\bfu\|$ will be bounded by a polynomial in $|\bfa^\bfu|_Y$ when $\bfu \in U \cap \Z^m$.

Consider the Jordan normal form $PT^NP^{-1}$ for $T^N$ (where $P \in GL_m(\C)$): it is a block-diagonal matrix with blocks $\hat{X},\hat{Y}_{\hat{y}_1},\ldots,\hat{Y}_{\hat{y}_{\hat{z}}}$ for some $\hat{y}_1,\ldots,\hat{y}_{\hat{z}} \in \N$ with (without loss of generality) $\hat{y}_1 \leq \cdots \leq \hat{y}_{\hat{z}}$, where $X \in GL_{\hat{x}}(\C)$ has no eigenvalues equal to $1$ and
\begin{equation*}
\hat{Y}_{\hat{y}} = \begin{pmatrix}
1 & 1 \\
& 1 & \ddots \\
&& \ddots & 1 \\
&&& 1
\end{pmatrix} \in GL_{\hat{y}}(\C)
\end{equation*}
for $\hat{y} \in \N$. For an element $\bfu \in \C^m$, write
\[
P\bfu = (u_1,\ldots,u_{\hat{x}},u_{1,1},\ldots,u_{1,\hat{y}_1},\ldots,u_{\hat{z},1},\ldots,u_{\hat{z},\hat{y}_{\hat{z}}}).
\]
Define a seminorm on $\C^m$ by setting
\[
\| \bfu \| = \| \pi_U P\bfu \|_\infty = \max \{ |u_{i,j}| \mid 1 \leq i \leq \hat{z}, 1 \leq j \leq \hat{y}_i \}
\]
where $\pi_U$ is the projection of $\C^m$ onto $PU$ obtained by setting the first $\hat{x}$ coordinates of $PU$ to zero, and note that $\| \cdot \|$ becomes a norm when restricted to $U$. Note that 
\begin{equation*}
\hat{Y}_{\hat{y}}^n = \begin{pmatrix}
1 & n & \genfrac(){0pt}{1}{n}{2} & \cdots & \genfrac(){0pt}{1}{n}{\hat{y}-1} \\
& 1 & n & \cdots & \genfrac(){0pt}{1}{n}{\hat{y}-2} \\
&& \ddots & \ddots & \vdots \\
&&& 1 & n \\
&&&& 1
\end{pmatrix}
\end{equation*}
for all $\hat{y} \in \N$ and $n \in \Z$, where for any $n \in \R$ and $r \in \N$ we define $\genfrac(){0pt}{1}{n}{r} = \prod_{i=1}^r \frac{n-i+1}{i}$. It follows from the above that
\[
\|T^{nN}\bfv\| \leq \|\bfv\| \sum_{i=0}^{\hat{y}-1} \left| \genfrac(){0pt}{1}{n}{i} \right|
\]
for all $\bfv \in \C^m$, where $\hat{y} = \hat{y}_{\hat{z}}$. Since $\left| \genfrac(){0pt}{1}{\hat{n}}{i} \right| \leq \genfrac(){0pt}{1}{\hat{n}+i}{i} \leq \genfrac(){0pt}{1}{n+i}{i}$ whenever $n \geq \hat{n} \geq 0$ and since $\genfrac(){0pt}{1}{\hat{n}}{i} = (-1)^i \genfrac(){0pt}{1}{-\hat{n}+i-1}{i}$ for $\hat{n} < 0$, it follows that
\[
\|T^{\hat{n}N}\bfv\| \leq \|\bfv\| p_0(n),
\]
where $p_0(n) = \sum_{i=0}^{\hat{y}-1} \genfrac(){0pt}{1}{n+i}{i}$, whenever $n \in \R$ and $\hat{n} \in \Z$ with $|\hat{n}| \leq n$. Since any element $g \in B_Y(n)$ has $\mathfrak{h}(\omega_g) \leq cn$ (where $\omega_g$ is a geodesic word representing $g$), it follows that if $\bfa^\bfu \in B_X(n)$ then
\[
\| \bfu \| \leq d_0np_0(cn/N)
\]
where $d_0 = \max \{ \|\bfe_i\| \mid 1 \leq i \leq m \}$. Now since $U \cap \Z^m$ is free abelian, we may pick a basis $\bfu_1,\ldots,\bfu_j$ for $U \cap \Z^m$ and define a norm on $\C\langle U \cap \Z^m \rangle$ by
\[
\left\| \sum_{i=1}^j \alpha_i \bfu_i \right\|' = \max \{ |\alpha_i| \mid 1 \leq i \leq j \}.
\]
As any two norms on a finite dimensional vector space are bi-Lipschitz equivalent, we obtain
\[
\| \bfu \|' \leq b_0\|\bfu\| \leq b_0d_0np_0(cn/N)
\]
for some constant $b_0 > 0$ whenever $\bfu \in U \cap \Z^m$ and $|\bfa^\bfu|_Y \leq n$. But by the construction of $\| \cdot \|'$, there are at most $(2k+1)^j$ elements $\bfu \in U \cap \Z^m$ with $\| \bfu \|' \leq k$, and so
\[
|\mathcal{N} \cap \langle a_1,\ldots,a_m \rangle \cap B_Y(n)| \leq p_1(n)
\]
where $p_1(n) = (2b_0d_0np_0(cn/N)+1)^j$ is a polynomial.

Finally, since $\mathfrak{h}(\omega_g) \leq cn$ for all $g \in B_Y(n)$, it follows that any element $g \in \mathcal{N} \cap B_Y(n)$ can be written as $g = t^{-cn}\bfa^\bfu t^{cn}$ for some $\bfu \in \mathbb{Z}^m$, and so
\[
\bfa^\bfu = t^{cn} g t^{-cn} \in \mathcal{N} \cap \langle a_1,\ldots,a_m \rangle \cap B_Y(n+2cn).
\]
Since conjugation by $t^{cn}$ gives a bijection, we obtain
\[
|\mathcal{N} \cap B_Y(n)| \leq |\mathcal{N} \cap \langle a_1,\ldots,a_m \rangle \cap B_Y(n+2cn)| \leq p_1(n+2cn),
\]
and the right hand side is a polynomial, as required.
\end{proof}

Now let $N \in \N$ be such that all eigenvalues $\lambda$ of $T$ that are roots of unity satisfy $\lambda^N = 1$. The key fact justifying this choice of $N$ is that if $T^r\bfv = \bfv$ for some $\bfv \in \C^m$ and some $r \neq 0$, then also $T^N\bfv = \bfv$, and so we have an inclusion of eigenspaces $\ker(T^r-I) \leq \ker(T^N-I)$ for every $r \neq 0$. Let $H \leq G$ and $\mathcal{N} \subseteq G$ be as in \cref{l:ZHinA}.

\begin{lem} \label{l:CGig}
For any $g \in G$, let $C_G^{\mathcal{N}}(g) = \{ h \in G \mid [g,h] \in \mathcal{N} \}$. Then there exists a polynomial $q$ such that
\[
|C_G^{\mathcal{N}}(g) \cap B_Y(n)| \leq q(n)
\]
for all $g \in G \setminus A$ and $n \in \N$.
\end{lem}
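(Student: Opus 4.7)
The plan is to parametrise $h \in C_G^{\mathcal{N}}(g)$ by its $\tau$-image $\ell = \tau(h)$ and by the coset of $\mathcal{N}$ in $A$ that it determines, then invoke \cref{l:ZHinA} for each such coset. The key algebraic fact will be that, by the choice of $N$, the quotient $\Q^m/U$ (where $U = \bigcup_{i \geq 1} \ker(T^N - I)^i$) carries no $T$-eigenvalue that is a root of unity, so $T^k - I$ acts invertibly on $\Q^m/U$ for every integer $k \neq 0$.

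First, I would record two structural facts from the proof of \cref{l:ZHinA}: $\mathcal{N}$ is contained in $A$, and since $\mathcal{N}$ is the ascending union of the subgroups $Z_i(H)$, it is itself a subgroup of $A$; moreover, from the characterisation \eqref{e:whatisz} one has $\mathcal{N} = \{ a \in A \mid \varphi(a) \in U \}$, with $U$ a $T$-invariant $\Q$-subspace of $\Q^m$.

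Next, writing $g = g_A t^k$ and $h = h_A t^\ell$ with $g_A, h_A \in A$, $k = \tau(g)$, $\ell = \tau(h)$, a direct computation using normality and commutativity of $A$ gives
\[
[g,h] = t^{-k-\ell}\,\bigl(c_t^\ell(g_A^{-1})\,g_A\bigr)\bigl(h_A^{-1}\,c_t^k(h_A)\bigr)\,t^{k+\ell},
\]
where $c_t^i(a) = t^i a t^{-i}$. Applying $\varphi$ and using $\varphi \circ c_t^i = T^i \circ \varphi$ yields
\[
\varphi([g,h]) = T^{-k-\ell}\bigl[(I - T^\ell)\varphi(g_A) + (T^k - I)\varphi(h_A)\bigr].
\]
Since $U$ is $T$-invariant, the condition $[g,h] \in \mathcal{N}$ is equivalent to the congruence
\[
(T^k - I)\varphi(h_A) \equiv (T^\ell - I)\varphi(g_A) \pmod{U}.
\]
As $g \notin A$ forces $k \neq 0$, the invertibility of $T^k - I$ on $\Q^m/U$ determines $\varphi(h_A)$ uniquely modulo $U$ in terms of $\ell$. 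Hence, for each fixed $\ell$, the admissible $h_A$'s are constrained to a single coset of $\mathcal{N}$ in $A$ (possibly empty).

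Finally, since $|\tau(h)| \leq c|h|_Y \leq cn$ for $h \in B_Y(n)$, there are at most $2cn+1$ values of $\ell$. For each such $\ell$, the corresponding $h_A = h t^{-\ell}$ lies in $B_Y((1+c)n)$, and a standard translation argument bounds the cardinality of (coset of $\mathcal{N}$) $\cap\, B_Y((1+c)n)$ by $|\mathcal{N} \cap B_Y(2(1+c)n)|$, which by \cref{l:ZHinA} is polynomial in $n$. Summing over $\ell$ produces the required polynomial $q(n)$. I do not foresee any genuine obstacle: the only subtle point is the eigenvalue observation guaranteeing invertibility of $T^k - I$ on $\Q^m/U$, which follows immediately from the defining property of $N$; everything else is bookkeeping.
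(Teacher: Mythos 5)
Your proof is correct and uses essentially the same strategy as the paper's: decompose $C_G^{\mathcal{N}}(g)$ according to $\tau(h)$, show that each fibre is contained in a single coset of $\mathcal{N}$ in $A$, invoke \cref{l:ZHinA} via a coset-translation argument, and sum over the $O(n)$ admissible values of $\tau(h)$. The only difference is presentational: you phrase the key algebraic input as invertibility of $T^k-I$ on $\Q^m/U$ and read off the constraint on $\varphi(h_A)$ directly as a congruence modulo $U$, whereas the paper reaches the same conclusion by taking two elements $h_1,h_2$ in the same $\tau$-coset, using $\ker(T^{s-r}-I)\le\ker(T^N-I)$ to get $(T^N-I)^{i+1}(T^{\ell}\bfu_2-\bfu_1)=\bfzero$, and concluding $h_1h_2^{-1}\in Z_{i+1}(H)\subseteq\mathcal{N}$; these are the same algebraic fact in two guises.
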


\begin{proof}
To show this, we will express the condition $[g,h] \in \mathcal{N}$ in terms of a linear equation, and use this and \cref{l:ZHinA} to bound $|C_G^{\mathcal{N}}(g) \cap t^kA \cap B_Y(n)|$ by a polynomial for any $k \in \Z$. By summing over all possible values of $k \in \Z$ the result will then follow easily.

Let $k \in \Z$ and let $g = t^{-r} \bfa^{\bfu} t^s \in G$ with $g \notin A$ (that is, $r \neq s$). For any $h = t^{-r_0} \bfa^{\bfu_0} t^{s_0}$, we have 
\begin{align*}
[g,h] &= t^{-s} \bfa^{-\bfu} t^{r-s_0} \bfa^{-\bfu_0} t^{r_0-r} \bfa^{\bfu} t^{s-r_0} \bfa^{\bfu_0} t^{s_0} \\ &= t^{-s-s_0} (t^{s_0} \bfa^{-\bfu} t^{-s_0}) (t^r \bfa^{-\bfu_0} t^{-r}) (t^{r_0} \bfa^{\bfu} t^{-r_0}) (t^s \bfa^{\bfu_0} t^{-s}) t^{s+s_0} \\ &= t^{-s-s_0}\bfa^{-T^{s_0}\bfu-T^r\bfu_0+T^{r_0}\bfu+T^s\bfu_0} t^{s+s_0}.
\end{align*}
By \eqref{e:whatisz}, it follows that
\begin{equation} \label{e:centcrit}
[g,t^{-r_0} \bfa^{\bfu_0} t^{s_0}] \in Z_i(H) \qquad \Leftrightarrow \qquad (T^N-I)^i \left( (T^{r_0}-T^{s_0})\bfu + (T^s-T^r)\bfu_0 \right) = \bfzero.
\end{equation}

Now take any two elements $h_j = t^{-r_j} \bfa^{\bfu_j} t^{s_j} \in C_G^{\mathcal{N}}(g) \cap t^kA$ for $j = 1,2$ for some $k \in \Z$. Let $i \in \N$ be such that $[h_j,g] \in Z_i(H)$ for $j = 1,2$. Let $\ell = r_1 - r_2$ (and so $\ell = s_1 - s_2$ since $\tau(h_1) = \tau(h_2) = k$). Then by \eqref{e:centcrit} we have
\begin{align*}
(T^N-I)^i(T^s-T^r)\bfu_1 &= (T^N-I)^i(T^{s_1}-T^{r_1})\bfu = (T^N-I)^i(T^{s_2}-T^{r_2})T^\ell\bfu \\ &= T^\ell(T^N-I)^i(T^{s_2}-T^{r_2})\bfu = T^\ell(T^N-I)^i(T^s-T^r)\bfu_2,
\end{align*}
and so $(T^N-I)^i(T^s-T^r)(T^\ell\bfu_2-\bfu_1) = \bfzero$. Since $T$ is invertible over $\Q$, this gives
\[
(T^N-I)^i(T^{s-r}-I)(T^\ell\bfu_2-\bfu_1) = \bfzero.
\]

Now by the choice of $N$, and since by assumption on $g$ we have $r \neq s$, it follows that $\ker(T^{s-r}-I) \leq \ker(T^N-I)$, and so $(T^N-I)^{i+1}(T^\ell\bfu_2-\bfu_1) = \bfzero$. Hence, by \eqref{e:whatisz}, either $\bfa^{\bfu_1-T^\ell\bfu_2} \in Z_{i+1}(H)$ (if $\ell \geq 0$) or $t^{\ell}\bfa^{T^{-\ell}\bfu_1-\bfu_2}t^{-\ell} \in Z_{i+1}(H)$ (if $\ell < 0$), and since $Z(H)$ is characteristic in $H$ (and so normal in $G$), the conjugate $h_1h_2^{-1}$ is also in $Z_{i+1}(H)$. Note that if $h_1,h_2 \in B_Y(n)$ then we have $|h_1h_2^{-1}|_Y \leq 2n$, and so \cref{l:ZHinA} implies (by fixing $h_1$ and letting $h_2$ vary) that
\[
|C_G^{\mathcal{N}}(g) \cap t^kA \cap B_Y(n)| \leq |\mathcal{N} \cap B_Y(2n)| \leq p(2n)
\]
for a polynomial $p$ (which does not depend on $g$ or $k$). Since any $h \in B_Y(n)$ has $|\tau(h)| \leq cn$, at most $2cn+1$ cosets of $A$ intersect $B_Y(n)$, and so we have
\[
|C_G^{\mathcal{N}}(g) \cap B_Y(n)| \leq (2cn+1)p(2n),
\]
where the right hand side is a polynomial, as required.
\end{proof}

\begin{proof}[Proof of \cref{t:dn}]
By \cref{t:A}, there exists $\beta > 1$ such that $\frac{|A \cap B_Y(n)|}{|B_Y(n)|} \leq \beta^{-n}$ for all $n \in \N$. Let $\alpha \in (1,\beta)$, and let $N$, $H$ be as above. We aim to show that $\gamma_n^Y(\mathcal{N}_r(G)) \leq \alpha^{-n}$ for all sufficiently large $n$. In order to prove this, we will show that the conditions of \cref{l:gennilp} are satisfied with $\mathcal{N}$ as above and a function $f$ such that $f(n) \leq \beta^{-n}$ for $n$ sufficiently large. This will imply the result.

For \eqref{e:gennilp1}, the inequality is immediate by \cref{l:ZHinA}. For \eqref{e:gennilp2}, the inequality is immediate by \cref{l:CGig} if $g \in G \setminus A$, hence we are left with the case $g \in A \setminus \mathcal{N}$. Since $A$ is abelian we have
\[
\frac{|\{ h \in B_Y(n) \cap A \mid [g,h] \in \mathcal{N} \}|}{|B_Y(n)|} = \frac{|A \cap B_Y(n)|}{|B_Y(n)|} \leq \beta^{-n}
\]
for all $n \in \N$ and $g \in A$, so it is enough to show that $\{ h \in G \mid [g,h] \in \mathcal{N} \} \subseteq A$ for all $g \in A \setminus \mathcal{N}$.

Thus, let $g \in A \setminus \mathcal{N}$, and suppose that $[g,h] \in \mathcal{N}$ for some $h = t^{-r_0}\bfa^{\bfu_0}t^{s_0} \in G$: $[g,h] \in Z_i(H)$, say. Then \eqref{e:centcrit} implies that $(T^N-I)^i(T^{r_0-s_0}-I)\bfu = \bfzero$, where $\bfu \in \Z^m$ is such that $\bfa^\bfu$ is conjugate to $g$. If we had $r_0 \neq s_0$ then we would have $\ker(T^{r_0-s_0}-I) \leq \ker (T^N-I)$ by the choice of $N$, and so $(T^N-I)^{i+1}\bfu = \bfzero$. This would imply that $g \in Z_{i+1}(H)$, contradicting $g \notin \mathcal{N}$. Hence indeed $r_0 = s_0$ and so $h \in A$, as claimed.
\end{proof}

\bibliographystyle{amsplain}
\bibliography{all}

\providecommand{\bysame}{\leavevmode\hbox to3em{\hrulefill}\thinspace}
\providecommand{\MR}{\relax\ifhmode\unskip\space\fi MR }
% \MRhref is called by the amsart/book/proc definition of \MR.
\providecommand{\MRhref}[2]{%
  \href{http://www.ams.org/mathscinet-getitem?mr=#1}{#2}
}
\providecommand{\href}[2]{#2}
\begin{thebibliography}{10}

\bibitem{bucher}
M.~Bucher and A.~Talambutsa, \emph{Minimal exponential growth rates of
  metabelian {B}aumslag-{S}olitar groups and lamplighter groups}, Groups Geom.
  Dyn. \textbf{11} (2017), no.~1, 189--209.

\bibitem{burillo}
J.~Burillo and E.~Ventura, \emph{Counting primitive elements in free groups},
  Geom. Dedicata \textbf{93} (2002), no.~1, 143--162.

\bibitem{ceg}
D.~J. Collins, M.~Edjvet, and C.~P. Gill, \emph{Growth series for the group
  {}$\langle x,y \mid x^{-1}yx = y^l \rangle$}, Arch. Math. (Basel) \textbf{62}
  (1994), 1--11.

\bibitem{fks}
E.~M. Freden, T.~Knudson, and J.~Schofield, \emph{Growth in
  {B}aumslag--{S}olitar groups {I}: Subgroups and rationality}, LMS J. Comput.
  Math. \textbf{14} (2011), 34--71.

\bibitem{gromov81}
M.~Gromov, \emph{Groups of polynomial growth and expanding maps}, Publ. Math.
  I. H. \'E. S. \textbf{53} (1981), 53--78.

\bibitem{hb}
P.~de~la Harpe and M.~Bucher, \emph{Free products with amalgamation and
  {HNN}-extensions of uniformly exponential growth}, Math. Motes \textbf{67}
  (2000), no.~6, 686--689.

\bibitem{kapovich}
I.~Kapovich, A.~Myasnikov, P.~Schupp, and V.~Shpilrain, \emph{Generic-case
  complexity, decision problems in group theory, and random walks}, J. Algebra
  \textbf{264} (2003), 665--694.

\bibitem{kronecker}
L.~Kronecker, \emph{Zwei {S}\"atze \"uber {G}leichungen mit ganzzahligen
  {C}oefficienten}, J. Reine Angew. Math. \textbf{53} (1857), 173--175.

\bibitem{mtvv}
A.~Martino, M.~Tointon, M.~Valiunas, and E.~Ventura, \emph{Probabilistic
  nilpotence in infinite groups}, preprint, available at
  \href{https://arxiv.org/abs/1805.11520}{arXiv:1805.11520} [math.GR], 2018.

\bibitem{parry}
W.~Parry, \emph{Examples of growth series of torus bundle groups}, J. Group
  Theory \textbf{10} (2007), no.~2, 245--266.

\bibitem{putman}
A.~Putman, \emph{The rationality of {Sol}-manifolds}, J. Algebra \textbf{304}
  (2006), no.~1, 190--215.

\bibitem{sanchez}
A.~P. S\'anchez and M.~Shapiro, \emph{Growth in higher {B}aumslag-{S}olitar
  groups}, Geom. Dedicata (2017), 1--21.

\bibitem{scott}
P.~Scott, \emph{The geometries of 3-manifolds}, B. Lond. Math. Soc. \textbf{15}
  (1983), 401--487.

\bibitem{tointon}
M.~Tointon, \emph{Commuting probability of infinite groups}, preprint,
  available at \href{https://arxiv.org/abs/1707.05565}{arXiv:1707.05565}
  [math.GR], 2017.

\end{thebibliography}

\end{document}